\numberwithin{equation}{section}
\newtheorem{definition}{Definition}[section]
\newtheorem{theorem}{Theorem}[section]
\newtheorem{lemma}[theorem]{Lemma}
\theoremstyle{definition}
\theoremstyle{remark}
\newtheorem{remark}[theorem]{Remark}
\numberwithin{equation}{section}
\newcounter{saveeqn}
\title{Inverse Problems for Nonlinear Progressive Waves}
\author{Yan Jiang}
\address{Department of Mathematics, Jilin University, Changchun, Jilin, China.}
\email{jiangyan20@mails.jlu.edu.cn}
\author{Hongyu Liu}
\address{Department of Mathematics, City University of Hong Kong, Hong Kong SAR, China.}
\email{hongyu.liuip@gmail.com, hongyliu@cityu.edu.hk}
\author{Tianhao Ni}
\address{School of Mathematical Sciences, Zhejiang University, Hangzhou, Zhejiang, China.}
\email{12035009@zju.edu.cn}
\author{Kai Zhang}
\address{Department of Mathematics, Jilin University, Changchun, Jilin, China.}
\email{zhangkaimath@jlu.edu.cn}
\date{} 
\begin{document}
	
\begin{abstract}

We propose and study several inverse problems associated with the nonlinear progressive waves that arise in infrasonic inversions. The nonlinear progressive equation (NPE) is of a quasilinear form $\partial_t^2 u=\Delta f(x, u)$ with $f(x, u)=c_1(x) u+c_2(x) u^n$, $n\geq 2$, and can be derived from the
hyperbolic system of conservation laws associated with the Euler equations. We establish unique identifiability results in determining $f(x, u)$ as well as the associated initial data by the boundary measurement. Our analysis relies on high-order linearisation and construction of proper Gaussian beam solutions for the underlying wave equations. In addition to its theoretical interest, we connect our study to applications of practical importance in infrasound waveform inversion.

\medskip
	
\noindent{\bf Keywords:}~~Infrasonic inversion; nonlinear progressive wave equation; conservation laws; inverse problems; unique identifiability; high-order linearisation; Gaussian beams.

\noindent{\bf 2010 Mathematics Subject Classification:}~~35R30, 35A27, 35L70
\end{abstract}
\maketitle

\section{Introduction}\label{sec:intro}

Infrasound refers to sound waves with frequencies below the audible range of human hearing. One of the earliest documented experiments with infrasound is due to Leon Scott who developed the phonoautograph to trace sound waves onto a moving surface in 1857. With the development of sensitive instruments that could detect low-frequency sound waves, infrasonic waves generated by earthquakes, volcanoes, and other natural phenomena can nowadays be effectively measured; see e.g. \cite{MW05,PHMV05,WOH05} and the references cited therein. In recent years, infrasounds continue to attract significant attention in the literature due to their key role in a variety of emerging new applications including the understanding of animal communications and behaviours, and the detection of nuclear explosions as well as other military applications. These infrasonic wave signals can be used to locate the source areas and properties and infer the medium environments (cf. \cite{BBGHOW06}).

The nonlinear progressive-wave equation (NPE) is often used to model the infrasonic wave propagation \cite{MK87,CGRKM04}. Originally designed for the study of nonlinear propagation under water from high-intensity sources, the NPE has proven to be a valuable modeling tool, demonstrating strong agreement with measurements across a wide range of underwater scenarios. Its unique feature lies in its ability to reduce the complex system of Euler fluid dynamical system into a single equation associated with a single variable, namely the density. In \cite{MCW94,CW95}, McDonald et al. developed the NPE model with or without the perturbation expansions, based on the assumption that all propagation is outgoing and the shocks are weak. The core principle of the NPE is the resolution of a simplified nonlinear wave equation within a moving window that surrounds the wavefront. Due to this approach, the method is only suitable for finite-length signals. However, by limiting the calculation domain to a small area around the signal, the computational cost is significantly reduced in comparison to methods based on Euler's equations. To more closely depict the real physical processes, Too and Ginsberg incorporated the thermal viscous absorption effects into the model \cite{TG92a,TG92b}. The corresponding NPE is derived from the motion equation, the continuity equation, and the state equation, with the goal of describing the propagation characteristics of infrasonic waves in non-uniform atmospheres. This is particularly important for low-frequency sound waves, which may exhibit anomalous propagation in the presence of atmospheric wind fields and specific temperature structures. The NPE has been applied successfully in many practical problems including underwater explosions, non-proliferation research, and atmospheric sonic booms. We refer to \cite{LJDS09,McD00} and the references therein for more related background discussion.

In its typical formulation, the NPE is a quasilinear hyperbolic equation of the form
\begin{equation}\label{eq:NPE1}
\partial_t^2 u(x, t)=\Delta f(x, u(x,t)),\quad f(x, u(x, t))=c_1(x) u(x, t)+c_2(x) u^n(x, t),
\end{equation}
where $n\in\mathbb{N}$ and $n\geq 2$. In \eqref{eq:NPE1}, $u(x, t)$ with $(x, t)\in\mathbb{R}^d\times\mathbb{R}_+$ denotes the density function, and $c_j$, $j=1,2$, signify the medium properties of the space. The NPE \eqref{eq:NPE1} is associated with proper initial conditions which signify the infrasound sources. In the next section, we shall provide a more detailed derivation of the NPE from the hyperbolic system of conservation laws associated with the Euler equations. In this paper, we are mainly concerned with the mathematical study of the inverse problems of recovering $c_1$ and $c_2$, and/or the underlying sources by the corresponding boundary measurements. In the physical setup, the inverse problems correspond to identifying the unknown sources of the infrasound and/or the properties of the medium that it penetrate through via boundary measurements of the infrasound wave signals. We establish the unique identifiability results for the inverse problems mentioned above under generic situations. To our best knowledge, these are the first theoretical results in the literature for the inverse boundary problems associated with the NPE \eqref{eq:NPE1}, though they have been extensively studied in the physical and engineering literature. In fact, even from a mathematical point of view, the inverse boundary problem for \eqref{eq:NPE1} presents sufficient technical challenges due to that the unknowns appear in the leading-oder nonlinear term, namely $f(x, u)$. In order to tackle the inverse problems, we develop new techniques that combine the high-order linearisation and the construction of proper Gaussian beams as the ``probing modes". Finally, we would like to mention in passing that a series of inverse problems associated to nonlinear wave equations have been addressed in different mathematical and physical contexts  \cite{LLL23,KLU18,HUW20, UZ21a,UZ21b,WZ19}. Some of those studies are related to the current work and we shall provide more detailed discussion in what follows.

The rest of the paper are organized as follows. In Section~\ref{sec:2}, we present the mathematical setups of the forward and inverse problems as well as state the main results. Section~\ref{sec:3} is devoted to the proof of the main theorem for the local well-posedness of the forward problem, whereas Sections~\ref{sec:4} and \ref{sec:5} are respectively devoted to the proofs of two results regarding the inverse problems.

\section{Mathematical setups and statement of main results}\label{sec:2}

In this section, we present the mathematical setups of the forward and inverse problems as well as state the main results.

\subsection{Nonlinear Progressive Equation}

We briefly discuss the derivation of the NPE from the hyperbolic system of conservation laws by following the approach in \cite{MK87,Gin21}. On the one hand, this provides more details about the physical background and motivation of the current study and on the other hand, it paves the way for many subsequent extensions in our forthcoming works. Let us consider the following Euler equations in three-space dimensions, which describes the conservation of mass momentum and energy:
\begin{equation}\label{eq:Euler}
\left\{
\begin{aligned}
&\rho\left[\frac{\partial v}{\partial t}+(v\cdot \nabla v)\right]=-\nabla p\quad&&\mbox{(Euler's Equation)},\\
&\frac{\partial \rho}{\partial t}+\nabla \cdot (\rho v)=0\quad&&\mbox{(Equation of Continuity)},\\
&p = p(\rho,s)\quad&&\mbox{(State Equation)},
\end{aligned}
\right.
\end{equation}
where $p,\rho,v,s$  are the sound pressure, atmospheric density, velocity field, and entropy, respectively. Writing $v=(v_j)_{j=1}^3$, \eqref{eq:Euler} can be recast in the following form of conservation laws:
\begin{equation}\label{eq:cv1}
\partial_t U(x,t)+\nabla\cdot F(U(x,t))=0,
\end{equation}
where
\begin{equation}
U=\left(
\begin{array}{ll}
\rho\\
\rho v_1\\
\rho v_2\\
\rho v_3
\end{array}
\right),\qquad
F(U)=\left(
\begin{array}{lll}
\rho v_1&\rho v_2&\rho v_3\\
\rho v_1^2+p&\rho v_1v_2&\rho v_1v_3\\
\rho v_2v_1&\rho v_2^2+p&\rho v_2v_3\\
\rho v_3v_1&\rho v_3v_2&\rho v_3^2+p
\end{array}
\right).
\end{equation}

Consider the pulse-following coordinate system $(t',x_1',x_2',x_3')$:
\begin{align*}
t = \epsilon t', \quad
x_1 = x_1'-c_0t',\quad
x_2 = \epsilon^{1/2} x_2',\quad
x_3 = \epsilon^{1/2} x_3',
\end{align*}
where $\epsilon\in\mathbb{R}_+$ signifies an asymptotically small perturbation and $c_0$ is defined in the following Taylor expansion of the state equation
\begin{align*}
	p= c_0^2\rho+\frac{1}{2}\rho^2\frac{\partial^2 p}{\partial \rho^2},
\end{align*}
i.e. $c_0^2:=\frac{\partial p}{\partial \rho}$. Suppose that the pressure, the density, and the velocity
are all small disturbances of constant states such that
\[
w(t',x_1',x_2',x_3') = \sum\limits_{k=0}^{\infty}\epsilon^k w_k(t,x_1,x_2,x_3),\ \ w=\rho, p\ \mbox{or}\ v.
\]
where $\rho_0$ and $p_0$ are constant and $v_0=0$. Then the governing equations in \eqref{eq:Euler} can be rewritten as
\begin{align}
	\label{eq:NPE0}
	\partial_{t'}^2\rho = \nabla'^2 p+\sum\limits_{i',j'=1}^n\partial_{x_i'}\partial_{x_j'}(\rho v_iv_j),
\end{align}
where $v_i$ denotes the $i$-th direction of the velocity and
\begin{eqnarray*}
&\partial_{x_1'} = \partial_{x_1},\quad  &(\partial_{x_2'},\partial_{x_3'})= \epsilon^{1/2}(\partial_{x_2},\partial_{x_3}),\\
&\partial_{t'} = \epsilon\partial_t-c\partial_{x_1},\quad &\nabla'^2 = \partial_{x_1}^2+\epsilon(\partial_{x_2}^2+\partial_{x_3}^2).
\end{eqnarray*}
Supposing further that the first-order approximation for plane waves propagating at a small angle in the $x_1$ direction,
i.e.
\begin{align}
	v = (c_0\rho_1/\rho_0,0,0)^T,
\end{align}
then the equation (\ref{eq:NPE0}) can be rewritten as
\begin{align}
\frac{\partial^2\rho_1}{\partial t^2} = \Delta \left[c_0^2\left(\rho_1+\frac{\beta \rho_1^2}{\rho_0}\right)\right],
\end{align}
where $\beta = 1+\frac{\rho}{c_0}\frac{\partial c_0}{\partial \rho}$ is a nonlinear coefficient.
Let $u=\rho_1$, $c_1=c_0^2$, $c_2=c_0^2\frac{\beta}{\rho_0}$, the NPE \eqref{eq:NPE0} can be recast as
\begin{equation}\label{eq:NPEE1}
\partial_t^2 u= \Delta (c_1u+c_2u^2)
\end{equation}
which is \eqref{eq:NPE1} with $n=2$.

It is noted that in the one-space dimensional case, \eqref{eq:NPEE1} can also be written into the form of conservation laws:
\begin{equation}\label{eq:cv2}
\partial_t V(x,t)+ \nabla\cdot G(V(x,t))=0,
\end{equation}
with
\begin{equation}
V=\left(
\begin{array}{ll}
\rho_1\\
v
\end{array}
\right),\qquad
G(V)=\left(
\begin{array}{ll}
v\\
c_0^2\left(\rho_1+\frac{\beta \rho_1^2}{\rho_0}\right)
\end{array}
\right).
\end{equation}
From \eqref{eq:cv2}, one can consider a more general form of $G(V)=[v, p(x, \rho_1)]^T$ where $p$ is a certain function of $p$, say e.g. $p(x, \rho_1)=\alpha_1(x)\rho_1+\alpha_2(x)\rho_1^n$ with $n\geq 2$, and $\alpha_j$ being the coefficient functions. This more general model can occur in modelling the gas dynamics; see e.g. \cite{Maj}.

\subsection{Mathematical setups of the forward and inverse problems}

Following our discussion in the previous section, we consider the NPE system as follows:
\begin{equation}
\label{eq:NPE order2}
 \left\{
\begin{aligned}
	&\partial_t^2 u= \Delta (c_1u+c_2u^n)\ &&\text{in}\ M:=[0,T]\times\Omega,\\
	&u(0,x) = \varphi(x),\ u_t(0,x) = \psi(x)\ &&\text{on}\ \Omega,\\
    &u(t,x) = h(t,x)\ &&\text{in}\ \Sigma:=[0,T]\times \Gamma,		
\end{aligned}\right.
\end{equation}
where $n\in\mathbb{N}$ and $n\geq 2$, $h\in L^2(\Sigma)$ and $(\varphi,\psi)\in H^1(\Omega)\times L^2(\Omega)$. Throughout the rest of the paper, we confine our study within the three-space dimensions. In \eqref{eq:NPE order2}, $\Omega\subset \mathbb{R}^3$ is a nonempty bounded domain with a smooth boundary {$\Gamma=\partial \Omega \in C^{\infty}$} and $T>0$. We let $\nu$ denote the unit outer normal vector on $\Gamma$. In the physical setup, $\varphi$ and $\psi$ signify the sources, whereas $h$ denotes the boundary input. Furthermore, for the physical purpose, the functions $c_1$ and $c_2$ satisfy the following admissible coefficient conditions:
\begin{enumerate}
	\item[(1)] $c_1$ and $c_2$ are only dependent on the spatial variable $x$ and smooth with respect to $x$;\smallskip
	
	\item[(2)] There exist {two constants $M_1$ and $M_2$ such that $M_2>c_i(x)\geq M_1>0$, $i=1,2$};\smallskip
	
    \item[(3)] $\nu\cdot \nabla c_1=0$ on $\Gamma$.
\end{enumerate}

Next, we introduce some function spaces and notations for our subsequent use. For any integer $m\geq 0$, we set
\begin{equation*}
	\mathcal{O}^{m}(\Sigma) = \left\{h \in H^m(\Sigma);\,  h\in H_0^{m-k}([0,T];H^k(\Gamma)),\, \forall\, k=0,1,\cdots,m-1\right\},
\end{equation*}
and define the energy space as
\begin{equation*}
	E^m = \bigcap\limits_{k=0}^m C^k([0,T];H^{m-k}(\Omega)),
\end{equation*}
where the associated norm of the energy space $E^m$ is given by
\begin{equation*}
	\|v\|_{E^m} = \sup\limits_{0\leq t\leq T}\sum\limits_{k=0}^m\left\|\partial_t^k v(t,\cdot)\right\|_{H^{m-k}(\Omega)},\ \forall\, v\in E^m.
\end{equation*}
Moreover, it follows from the estimate	
\begin{equation*}
	\|wv\|_{E^m}\leq C_m\|w\|_{E^m}\|v\|_{E^m},\ \forall\, w,v \in E^m,
\end{equation*}
that the space $E^m$ is a Banach algebra (cf.\cite{Bru09}).

Associated with the NPE system \eqref{eq:NPE order2}, we set
\begin{equation}\label{eq:DL}
	D^L(u)=c_1\nabla u,\quad D^N(u)=c_1 \nabla (\frac{c_2}{c_1^n}u^n),
\end{equation}
and
\begin{equation}\label{eq:L}
	L(u)=\nabla\cdot D^L(u),\quad N(u)=\nabla\cdot  D^{N}(u).
\end{equation}
Then we introduce the following Dirichlet-to-Neumann (NtD) map:
\begin{equation}\label{eq:kappa def}
\Lambda_{\varphi,\psi,L,N} (h)=\nu\cdot \left[D^L(u)+D^N(u)\right]\big|_{\Sigma},
\end{equation}
where $h=u\big|_{\Sigma}\in \mathcal{O}^{m+1}(\Sigma)$ and $u\in E^{m+1}$ is the solution to \eqref{eq:NPE order2}. In Section~\ref{sec:3}, we shall prove the well-posedness of the DtN map $\Lambda_{\varphi,\psi, L, N}$ in a proper sense. The DtN map encodes the measurement data on $\partial\Omega$ associated with different probing inputs $h$ on $\partial\Omega$, as well as the underlying target objects including the medium configurations $c_1, c_2$ and the infrasound sources $\varphi, \psi$. In the case that $\varphi=\psi=0$, the infrasound field is solely determined by the interaction between $c_1, c_2$ and the probing input $h$, and we abbreviate $\Lambda_{0,0,L,N}$ as $\Lambda_{L,N}$. The first inverse problem that we are concerned with is to recover $c_1$ and $c_2$ by knowledge of $\Lambda_{L,N}$, assuming that $\varphi=\psi\equiv 0$. That is,
\begin{equation}\label{eq:IP1A}
(\mbox{IP1})\qquad\qquad\Lambda_{L,N}\longrightarrow L\ \mbox{and}\ N.
\end{equation}
We shall mainly address the theoretical unique identifiability issue for the above inverse problem. That is, we shall derive general conditions on the unknown $c_1$ and $c_2$ which guarantee that $\Lambda_{L_1, N_1}=\Lambda_{L_2, N_2}$ if and only if $(L_1, N_1)=(L_2, N_2)$, where $(L_j, N_j)$, $j=1,2$ are two admissible medium configurations. Here, we note that if letting $(c_1^{(j)}, c_2^{(j)})$ denote the underlying medium parameters associated with $(L_j, N_j)$, $j=1,2$, the above uniqueness result clearly yields that $(c_1^{(1)}, c_2^{(1)})=(c_1^{(2)}, c_2^{(2)})$. The second inverse problem of our study is to simultaneously recover the underlying sources and the medium configuration by knowledge of $\Lambda_{\varphi, \psi, L, N}$:
\begin{equation}\label{eq:IP1B}
(\mbox{IP2})\qquad\qquad \Lambda_{\varphi, \psi, L, N}\longrightarrow \varphi, \psi, L, N.
\end{equation}
In order to establish the unique identifiability for IP2, we need to assume that $L$ is a-priori known, which shall be detailed in what follows. Finally, we would also like to note that the cases $n=2$ and $n>2$ in \eqref{eq:NPE order2} shall present some technical differences for the inverse problem study, which shall also be detailed in what follows.


\subsection{Statement of the main results on direct and inverse NPE problems}

The first result is on the local well-posedness of the NPE system \eqref{eq:NPE order2} which paves the way for our study of the inverse problems IP1 and IP2. To that end, we first introduce the following compatibility conditions on the inputs $\varphi, \psi$ and $h$ in \eqref{eq:NPE order2}.

\begin{definition} On the boundary $\Gamma$ and $m\geq 0$, if the following relations hold
\begin{equation}\label{eq:compatibility conditions}
\left\{\begin{array}{l}
h(x, 0)=\varphi(x) \text { on } \Gamma, \\
h_t(x, 0)=\psi(x) \text { on } \Gamma, \\
h_{tt}(x, 0)=\Delta (c_1(x)\varphi(x)+c_2(x)\varphi^2(x)) \text { on } \Gamma, \\
h_{ttt}(x, 0)=\Delta (c_1(x)\psi(x)+2c_2(x)\varphi(x)\psi(x)) \text { on } \Gamma, \\
\cdots \\
\text{up to the $m$th-order temporal derivative of } $h$ \text { on } \Gamma,
\end{array}\right.
\end{equation}
the triple $(\varphi,\psi,h)$ is said to satisfy the compatibility conditions up to the order $m$.
\end{definition}

Now, we are in a position to show the well-posedness of the forward problem \eqref{eq:NPE order2}.

\begin{theorem}\label{thm:LWP1}
Suppose $m>\frac{3}{2}$, and $c_1$ and $c_2$ satisfy the admissible coefficient conditions. Then there exists a constant $\delta>0$ such that for any $(\varphi,\psi,h)$ belonging to the set
\begin{equation}\label{eq:setn1}
\begin{split}
\mathcal{N}_{\delta} =& \bigg\{(\varphi,\psi,h)\in H_0^{m+1}(\Omega)\times H_0^m(\Omega)\times \mathcal{O}^{m+1}(\Sigma);\\
&\qquad (\varphi, \psi, h)\ \mbox{satisfies the $m$th-order compatibility conditions and}\\
&\qquad  \|\varphi\|_{H^{m+1}(\Omega)}+\|\psi\|_{H^{m}(\Omega)}+\|h\|_{H^{m+1}({\Sigma})}<\delta  \bigg\},
\end{split}
\end{equation}
the forward problem \eqref{eq:NPE order2} admits a unique solution $u\in E^{m+1}$ satisfying that $\partial_{\nu} u\in H^{m}({\Sigma})$ and
\begin{equation}\label{eq:est u P1}
	\|u\|_{E^{m+1}}+\|u\|_{C(M)}+\|\partial_{\nu} u\|_{H^{m}(\Sigma)}\leq C\delta,
\end{equation}
where $C$ is a positive constant depends only on $\Omega$ and $T$.

In addition, the mapping which sends the initial boundary conditions to the solution of \eqref{eq:NPE order2} as:
\begin{equation}\label{eq:def F}
    F:(\varphi,\psi,h)\mapsto u, \quad  \mathcal{N}_{\delta}\to E^{m+1},
\end{equation}
is $C^{\infty}$ Fr\'{e}chet differentiable.
\end{theorem}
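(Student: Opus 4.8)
The plan is to set up a standard fixed-point iteration scheme in the energy space $E^{m+1}$, exploiting the Banach algebra property of $E^m$ for $m>3/2$ to control the nonlinear term $\Delta(c_2 u^n)$. First I would reduce the problem to one with homogeneous boundary data: pick a lift $H\in E^{m+1}$ of $h\in\mathcal{O}^{m+1}(\Sigma)$ (possible precisely because of the structure of $\mathcal{O}^{m+1}$ and the $m$th-order compatibility conditions, which guarantee the time-traces of the lift match $\varphi,\psi$ and their iterated time-derivatives on $\Gamma$), and set $w=u-H$, so that $w$ solves a wave-type equation with a nonlinear source, zero Dirichlet data, and modified (but still small and compatible) initial data. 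The linear principal part is $\partial_t^2 w - \mathrm{div}(c_1\nabla w)$, a second-order hyperbolic operator with smooth coefficients bounded below by $M_1>0$; the condition $\nu\cdot\nabla c_1=0$ on $\Gamma$ ensures the natural boundary operator is compatible with the energy estimates and with the hidden regularity of the Neumann trace.

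The core step is the linear theory: for the inhomogeneous problem $\partial_t^2 w-\mathrm{div}(c_1\nabla w)=G$, $w|_\Sigma=0$, $w(0)=\varphi_0$, $\partial_t w(0)=\psi_0$, one has the energy estimate $\|w\|_{E^{m+1}}+\|\partial_\nu w\|_{H^m(\Sigma)}\le C\big(\|\varphi_0\|_{H^{m+1}}+\|\psi_0\|_{H^m}+\|G\|_{L^1([0,T];H^m(\Omega))}\big)$, where the boundary term is the ``hidden regularity'' estimate for the Neumann trace (this is where smoothness of $\Gamma$ and condition (3) enter). I would then define the solution map $w\mapsto \Phi(w)$ by solving the linear problem with $G=G(w):=\mathrm{div}(c_1\nabla(\tfrac{c_2}{c_1^n}(w+H)^n)) + (\text{terms from }c_1)$ — more precisely $G$ collects everything except the linear principal part after substituting $u=w+H$. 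Using the Banach algebra estimate $\|w_1w_2\|_{E^m}\le C_m\|w_1\|_{E^m}\|w_2\|_{E^m}$ together with smoothness and positive lower bound of $c_1,c_2$ (so $c_2/c_1^n$ is a smooth bounded multiplier), one bounds $\|G(w)\|_{E^m}$ — hence its $L^1([0,T];H^m)$ norm — by a polynomial in $\|w\|_{E^{m+1}}+\|H\|_{E^{m+1}}$ vanishing to second order. For $\delta$ small enough, $\Phi$ maps the ball $\{\|w\|_{E^{m+1}}\le C'\delta\}$ into itself and is a contraction there (the difference $G(w_1)-G(w_2)$ factors through $w_1-w_2$ times a polynomial in the $w_i$ and $H$, again by the algebra property), yielding a unique fixed point $w$, and thus $u=w+H\in E^{m+1}$ with $\partial_\nu u\in H^m(\Sigma)$ and the quantitative bound \eqref{eq:est u P1}; the bound $\|u\|_{C(M)}\le C\delta$ follows from the Sobolev embedding $H^{m-k}(\Omega)\hookrightarrow C(\overline\Omega)$ for suitable $k$ since $m>3/2$.

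For the $C^\infty$ Fréchet differentiability of $F:(\varphi,\psi,h)\mapsto u$, I would apply the implicit function theorem in Banach spaces. Define $\mathcal{G}:E^{m+1}\times\mathcal{N}_\delta\to E^{m-1}\times H^{m+1}(\Omega)\times H^m(\Omega)\times\mathcal{O}^{m+1}(\Sigma)$ (or an appropriate data space) by $\mathcal{G}(u;\varphi,\psi,h)=(\partial_t^2u-\Delta f(x,u),\,u(0)-\varphi,\,\partial_tu(0)-\psi,\,u|_\Sigma-h)$. This map is $C^\infty$ — indeed polynomial in $u$ — as a consequence of the Banach algebra structure, which makes $u\mapsto u^n$ smooth between the relevant spaces. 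Its partial derivative in $u$ at a solution is the linearised operator $v\mapsto(\partial_t^2 v-\Delta(c_1v+nc_2u^{n-1}v),\,v(0),\,\partial_tv(0),\,v|_\Sigma)$, a linear hyperbolic initial-boundary value problem with smooth coefficients; the linear well-posedness theory above (now with the extra lower-order coefficient $nc_2u^{n-1}$, which is small and smooth) shows this derivative is a Banach-space isomorphism onto the data space. The implicit function theorem then gives that $F$ is $C^\infty$.

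The main obstacle I expect is the hidden-regularity estimate for the Neumann trace $\partial_\nu u\in H^m(\Sigma)$ at the nonlinear level: the $L^2$-based ``hidden regularity'' for second-order hyperbolic equations is classical (Lasiecka–Lions–Triggiani type multiplier arguments), but propagating it to $H^m$ with time-tangential derivatives, for variable coefficient $c_1$ satisfying only $\nu\cdot\nabla c_1=0$ on $\Gamma$, and then closing the nonlinear iteration so that the Neumann-trace norm also contracts, requires careful bookkeeping — commuting tangential derivatives through the equation produces lower-order commutator terms that must be absorbed into the energy, and one must verify that the boundary condition on $c_1$ is exactly what makes these commutators harmless on $\Gamma$. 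A secondary technical point is the careful construction of the boundary lift $H$ respecting all $m$ compatibility relations simultaneously, which I would handle by the standard trace-theorem-with-compatibility construction on the cylinder $M$.
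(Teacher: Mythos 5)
Your second half (the implicit function theorem argument for smoothness of $F$) is essentially the paper's entire proof: the paper defines an error operator $E(v;\varphi,\psi,h)=(v_{tt}-L(v)-N(v);\,v(0)-\varphi,\,v_t(0)-\psi,\,v|_\Sigma-h)$ on carefully chosen Banach spaces, shows it is analytic, shows $E_u$ at the origin is an isomorphism by reducing $w_{tt}-\Delta(c_1w)=f$ to $v_{tt}-c_1\Delta v=c_1f$ via $v=c_1w$ and invoking the linear hyperbolic well-posedness lemma (which already contains the $\partial_\nu u\in H^m(\Sigma)$ hidden regularity you worry about), and then extracts existence, uniqueness, the estimate \eqref{eq:est u P1} \emph{and} the $C^\infty$ dependence all at once from the implicit function theorem. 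One minor difference: the paper linearises only at $u=0$, so the isomorphism is checked for the constant-in-time operator $\partial_t^2-\Delta(c_1\,\cdot\,)$, whereas you linearise at the solution $u$ and need the linear theory with the additional $E^m$ coefficient $nc_2u^{n-1}$; that is still covered by the quoted lemma, but the paper's choice is simpler.

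The genuine gap is in your first half, the contraction-mapping construction of the solution. You place the entire nonlinearity into the source, $G(w)\supset \Delta\bigl(c_2(w+H)^n\bigr)= nc_2(w+H)^{n-1}\Delta(w+H)+\cdots$, and claim $\|G(w)\|_{E^m}$ is bounded by a polynomial in $\|w\|_{E^{m+1}}+\|H\|_{E^{m+1}}$. This fails: for $w\in E^{m+1}=\bigcap_k C^k([0,T];H^{m+1-k}(\Omega))$ one only has $\Delta w\in E^{m-1}$, so $G(w)\in E^{m-1}$ and the linear solve returns an iterate in $E^{m}$, not $E^{m+1}$ --- the classical loss of one derivative for quasilinear (as opposed to semilinear) wave equations. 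Smallness of $\delta$ does not compensate for a derivative loss, so the iteration does not close in the ball $\{\|w\|_{E^{m+1}}\le C'\delta\}$ as written. To repair it you would either have to run the standard quasilinear scheme (freeze the top-order coefficient from the previous iterate, i.e.\ solve $\partial_t^2 w_{k+1}-\Delta\bigl((c_1+nc_2u_k^{n-1})w_{k+1}\bigr)=\text{l.o.t.}$ at each step, with uniform energy bounds and contraction in a weaker norm), or work in a space that carries the extra information $\|v_{tt}-\Delta v\|_{E^m}$ as part of its norm --- which is exactly what the paper's space $U_1$ does --- or simply drop the fixed-point argument altogether, since your own implicit function theorem setup, once the spaces are chosen as in the paper, already delivers existence, uniqueness and the estimate in addition to smoothness.
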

\begin{remark}
Theorem~\ref{thm:LWP1} states that the solution to \eqref{eq:NPE order2} with a small norm is well-posed. In fact, as discussed earlier, $u$ in (\ref{eq:NPE order2}) describes the small perturbation of the density, and hence it is physically justifiable to consider small solutions of (\ref{eq:NPE order2}).

It is also worth mentioning that the smoothness of mapping $F$ only depends on that of the coefficients $c_1$ and $c_2$. In fact, if $c_1$ and $c_2\in C^k(\Omega)$, $k\geq 3$, $F$ is $C^k$ Fr\'{e}chet differentiable on $\mathcal{N}_{\delta}$.

Finally, we would like to remark that though we only consider the three-dimensional case, the local well-posedness result in Theorem~\ref{thm:LWP1} holds for any space dimension $d\geq 2$ as long as $m>d/2$. In fact, this can be easily seen in our proof of the theorem in what follows. Nevertheless, we shall stick our study to the 3D case, especially for the relevant inverse problems.
\end{remark}

We proceed to consider the inverse problems associated with \eqref{eq:NPE order2}, i.e. IP1 and IP2. In studying the inverse problems, we need to utilize geodesics on a Lorentz manifold associated with $c_1$ and $c_2$. Hence, we next introduce the definition of the associated metric on the manifold mentioned above.
Let $\mathrm{d}s^2$ be the Euclidean metric, and the Riemannian metric $g_{c_1}$ associated with the wave speed $c_0$ be
\begin{equation}\label{eq:gc1}
		g_{c_1} = c_0^{-2}\mathrm{d}s^2=c_{1}^{-1}\mathrm{d}s^2.
\end{equation}
Then the wave travels along geodesics in the Riemannian manifolds $(\Omega,g_{c_1})$. Let diam$(\Omega)$ be the diameter of $\Omega$ with respect to $g_{c_1}$, i.e.
\begin{equation}
	\text{diam}(\Omega)=\sup\{\text{length of all geodesics in }(\Omega,g_{c_1})\}.
\end{equation}
We introduce the following two assumptions on $(\Omega, c_1, c_2)$ for the inverse problem study.

\medskip

{\bf Assumption I}. $T>$diam$(\Omega)$, $\partial\Omega$ is strictly convex with respect to the Riemannian metric $g_{c_1}$,
and either one of the following conditions holds (cf.\cite{WZ19})
\begin{itemize}
	\item[(i)] $(\Omega,g_{c_1})$ has no conjugate points;
	\item[(ii)] $(\Omega,g_{c_1})$ satisfies the foliation condition.
\end{itemize}

\medskip

{\bf Assumption II}. Let $\Omega_1$ be the smooth domain in $\mathbb{R}^3$ containing $\overline{\Omega}$. It is assumed that $c_1$ can be analytically continued from $\Omega$ to $\Omega_1$, which is still denoted by $c_1$. Moreover, it holds that
\begin{equation*}
		\exists\,\beta\in(0,2],\ \text{such that}\ x\cdot \nabla \frac{1}{{c_1}(x)}+(2-\beta)\frac{1}{{c_1}(x)}\ge0\ \text{in the distributional sense in}\ \mathscr{D}'(\Omega_1);
\end{equation*}
that is
\begin{equation}
\forall \varphi \in \mathscr{D}\left(\Omega_1\right) \text { with } \varphi \geq 0, \quad \int_{\mathbb{R}^3} \frac{1}{{c_1}(x)}(-\operatorname{div}(x \varphi(x))+(2-\beta) \varphi(x)) \mathrm{d} x \geq 0.
\end{equation}

Assumption I shall be needed in establishing the unique identifiability result for inverse problem IP1, whereas both Assumptions I and II shall be needed in establishing the unique identifiability result for inverse problem IP2. In fact, we have

\begin{theorem}\label{thm:IP1}
Suppose that $\varphi=\psi\equiv 0$ and {\bf Assumption I} holds. If
\begin{equation}
    \Lambda_{L_1, N_1}(h)=\Lambda_{L_2, N_2}(h), \quad  \forall\, (0, 0, h)\in \mathcal{N}_{\delta},
\end{equation}
where $\mathcal{N}_\delta$ is defined in \eqref{eq:setn1}, then one has
\begin{equation}
    (L_1, N_1)=(L_2, N_2).
\end{equation}
That is, the map $\Lambda_{L, N}$ uniquely determines $(L, N)$.
\end{theorem}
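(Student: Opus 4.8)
The plan is to exploit the higher-order linearisation of the Dirichlet-to-Neumann map around the zero solution, together with Gaussian-beam test functions, in two stages: first recover the linear part $L$ (equivalently $c_1$) from the first linearisation, and then recover the nonlinear part $N$ (equivalently $c_2$) from the $n$th-order linearisation once $c_1$ is known. For the first step, I would take a one-parameter family of boundary inputs $\varepsilon h$ with $h\in\mathcal{O}^{m+1}(\Sigma)$ and $(0,0,\varepsilon h)\in\mathcal{N}_\delta$ for small $\varepsilon$; by the $C^\infty$ Fr\'echet differentiability from Theorem~\ref{thm:LWP1}, the solution $u^{(\varepsilon)}$ admits an expansion $u^{(\varepsilon)}=\varepsilon u^{(1)}+\varepsilon^2 u^{(2)}+\cdots$, where $u^{(1)}$ solves the linear wave equation $\partial_t^2 u^{(1)}=\nabla\cdot(c_1\nabla u^{(1)})$ with zero Cauchy data and boundary value $h$. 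Differentiating the identity $\Lambda_{L_1,N_1}(\varepsilon h)=\Lambda_{L_2,N_2}(\varepsilon h)$ once in $\varepsilon$ at $\varepsilon=0$ yields $\nu\cdot c_1^{(1)}\nabla u^{(1)}_1=\nu\cdot c_1^{(2)}\nabla u^{(1)}_2$ on $\Sigma$; since the Cauchy data on the \emph{lateral} boundary agree, this is exactly the statement that the boundary Cauchy data of the linear wave operator $\partial_t^2-\nabla\cdot(c_1\nabla\cdot)$ coincide for $c_1^{(1)}$ and $c_1^{(2)}$. Under Assumption~I (finite time $T>\mathrm{diam}(\Omega)$, boundary strictly convex in $g_{c_1}$, no conjugate points or the foliation condition), the boundary control method / the boundary rigidity results invoked via \cite{WZ19} give $c_1^{(1)}=c_1^{(2)}$, hence $L_1=L_2$.

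For the second step, with $c_1:=c_1^{(1)}=c_1^{(2)}$ now fixed, I would differentiate the DtN identity $n$ times in $\varepsilon$ at $\varepsilon=0$. The key point is that $n$ is the lowest order at which the nonlinear coefficient $c_2$ enters: the term $\Delta(c_2 u^n)$ contributes at order $\varepsilon^n$ a source $\nabla\cdot\big(c_1\nabla(\tfrac{c_2}{c_1^n}(u^{(1)})^n)\big)$ driving the equation for $u^{(n)}$, while all lower-order profiles $u^{(1)},\dots,u^{(n-1)}$ depend only on $c_1$ and are therefore the same for both configurations. Matching the $n$th $\varepsilon$-derivatives of the Neumann data then produces the integral identity
\begin{equation}\label{eq:intid-c2}
\int_M (c_2^{(1)}-c_2^{(2)})\,\Big[\text{(expression quadratic-type in }c_1, u^{(1)}, w)\Big]\,\mathrm{d}x\,\mathrm{d}t=0
\end{equation}
after pairing with an adjoint linear-wave solution $w$ having the appropriate boundary/terminal data (integration by parts moves the extra $\Delta$ onto $w$, using the admissible condition $\nu\cdot\nabla c_1=0$ and the vanishing lateral/terminal data to kill boundary terms). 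I would then plug in Gaussian-beam solutions: take $u^{(1)}$ concentrating along a fixed null geodesic of the Lorentzian metric $dt^2-c_1^{-1}\,ds^2$ (well-defined since $c_1$ is now common), take $w$ concentrating along the same geodesic in the reverse time direction, and let the beam parameter $\to\infty$. The product $(u^{(1)})^{n-1}\nabla u^{(1)}\cdot\nabla w$ localises the integral \eqref{eq:intid-c2} to that geodesic, so after dividing by the leading-order amplitude factor one recovers that the light-ray transform (the integral of $c_2^{(1)}-c_2^{(2)}$ weighted by smooth nonvanishing amplitudes along null geodesics) vanishes for a sufficiently rich family of geodesics. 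Inverting this transform — using Assumption~I to guarantee enough geodesics reach the boundary and that the relevant X-ray/light-ray transform is injective — gives $c_2^{(1)}=c_2^{(2)}$, hence $N_1=N_2$, completing the proof.

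The main obstacle I anticipate is the careful bookkeeping in the $n$th-order linearisation and the derivation of \eqref{eq:intid-c2}: one must verify that the lower-order interaction terms genuinely cancel (this uses crucially that the \emph{only} unknown at orders $<n$ is $c_1$, already pinned down in Step 1), correctly handle the double divergence $\nabla\cdot(c_1\nabla(\cdots))$ under integration by parts against the adjoint state so that no uncontrolled boundary contributions survive — here the admissibility hypothesis $\nu\cdot\nabla c_1=0$ on $\Gamma$ and the compatibility/support conditions built into $\mathcal{N}_\delta$ and $\mathcal{O}^{m+1}(\Sigma)$ are essential — and, most delicately, ensure that the Gaussian-beam amplitudes along null geodesics are nonvanishing so that the concentration argument isolates $c_2^{(1)}-c_2^{(2)}$ without degeneracy. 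A secondary technical point is justifying the differentiation-under-the-DtN-map and the limit in the beam parameter within the function-space framework $E^{m+1}$, which should follow from the quantitative bounds \eqref{eq:est u P1} in Theorem~\ref{thm:LWP1} but needs to be done with uniform constants.
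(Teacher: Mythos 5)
Your proposal is correct in outline and its first two stages coincide with the paper's: a first-order linearisation reducing the recovery of $c_1$ to the linear hyperbolic DtN map (the paper substitutes $v=c_1u$, uses the admissibility condition $\nu\cdot\nabla c_1=0$ on $\Gamma$, and invokes Belishev's boundary control result), followed by an $n$th-order linearisation and an adjoint pairing that produces exactly the integral identity you sketch, with integrand $\nabla(c_1w)\cdot\nabla\bigl(\tfrac{c_2}{c_1^n}v^{(1)}\cdots v^{(n)}\bigr)$, into which Gaussian beams are inserted. Where you genuinely diverge is in the final extraction of $c_2$. The paper takes $n+1$ beams along \emph{distinct} null geodesics all passing through a single point $p=(\tfrac{T}{2},x_0)$, with null covectors $\zeta^{(k)}=(\sqrt{c_1},\xi^{(k)})$ chosen linearly dependent, $\sum_k\kappa_k\zeta^{(k)}=0$, so that the combined phase $\sum_k\kappa_k\varphi^{(k)}$ has $p$ as its unique critical point with positive-definite imaginary Hessian; the stationary phase lemma in all $1+3$ variables then yields $(c_2^{(1)}-c_2^{(2)})(x_0)$ multiplied by the nonvanishing factor $-\|\kappa_0\xi^{(0)}\|^2$, i.e. pointwise recovery with no transform to invert. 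You instead place all beams on one null geodesic and recover a weighted light-ray (equivalently, since $c_2$ is time-independent, geodesic X-ray) transform of $c_2^{(1)}-c_2^{(2)}$ on $(\Omega,g_{c_1})$, which you then invert using the no-conjugate-points/foliation hypotheses of Assumption~I. This is a viable alternative, but it costs you two extra ingredients the paper does not need: (i) injectivity of a \emph{weighted} geodesic ray transform with the nonvanishing beam-amplitude weight, which must be justified from the cited X-ray literature; and (ii) care in constructing the adjoint state $w$ — it cannot simply carry the phase $-n\varphi$ (whose imaginary Hessian would be negative and the ansatz would blow up transversally), but must be taken as the complex conjugate of an $n$-fold-frequency beam so that the real phases cancel while the Gaussian concentrations add. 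The paper's multi-beam stationary-phase route avoids the transform inversion entirely at the price of more elaborate phase bookkeeping.
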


\begin{theorem}\label{thm:IP2}
Suppose that {\bf Assumptions I} and {\bf II} hold, and $L$ is a-priori known. If
\begin{equation}
	\Lambda_{\varphi_1, \psi_1, L, N_1}(h)=\Lambda_{\varphi_2, \psi_2, L, N_2}(h),\quad  \forall\, (\varphi,\psi,h)\in \mathcal{N}_{\delta},
\end{equation}
where $\mathcal{N}_\delta$ is defined in \eqref{eq:setn1}, then one has
\begin{equation}
	(\varphi_1, \psi_1, N_1)=(\varphi_2, \psi_2, N_2).
\end{equation}
That is, the map $\Lambda_{\varphi, \psi, L, N}$ with an a-priori given $L$ uniquely determines $(\varphi, \psi, N)$.
\end{theorem}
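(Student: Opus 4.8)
The plan is to combine the higher-order linearisation scheme with a Gaussian beam analysis, exactly as in the proof of Theorem~\ref{thm:IP1}, but now carrying the initial data through the argument. First I would reduce to the setting of zero boundary data and nontrivial sources by an elementary superposition: since $L$ is known a-priori, the linear part $L(u)$ contributes identically to $\Lambda_{\varphi_1,\psi_1,L,N_1}$ and $\Lambda_{\varphi_2,\psi_2,L,N_2}$, so the hypothesis forces equality of the boundary traces of the two nonlinear Neumann contributions $\nu\cdot D^{N_j}(u^{(j)})$ for all admissible inputs in $\mathcal{N}_\delta$. Using Theorem~\ref{thm:LWP1} and the $C^\infty$ Fr\'echet differentiability of $F$, I would insert a one-parameter family of inputs $(\epsilon\varphi,\epsilon\psi,\epsilon h)$ and differentiate $\epsilon\mapsto\Lambda_{\epsilon\varphi,\epsilon\psi,L,N_j}(\epsilon h)$ at $\epsilon=0$. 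The first-order linearisation $v:=\partial_\epsilon u^{(j)}|_{\epsilon=0}$ solves the linear wave equation $\partial_t^2 v=L(v)$ with data $(\varphi,\psi,h)$ and, crucially, is \emph{independent of $j$} because $L$ is shared; hence the first linearisation carries no information and one must pass to the $n$-th order.

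The core step is the $n$-th order linearisation. Differentiating $n$ times and evaluating at $\epsilon=0$, all lower-order terms cancel by the previous linearisations and one is left with an identity of the form
\begin{equation}\label{eq:nth-lin-proposal}
\partial_t^2 w^{(j)} - L(w^{(j)}) = n!\,\nabla\cdot\Bigl(c_1\nabla\bigl(\tfrac{c_2^{(j)}}{c_1^{n}}v_1\cdots v_n\bigr)\Bigr),
\end{equation}
where $v_1,\dots,v_n$ are solutions of the linear wave equation with prescribed data, and where $w^{(1)}-w^{(2)}$ has vanishing Cauchy data and vanishing Neumann trace on $\Sigma$. Pairing \eqref{eq:nth-lin-proposal} for the difference against a further linear solution $v_0$ (an integration-by-parts / unique-continuation argument, using $T>\mathrm{diam}(\Omega)$ and strict convexity so that the linear wave operator's solutions fill out the relevant space), I would derive the integral identity
\begin{equation}\label{eq:int-id-proposal}
\int_{M}\Bigl(\tfrac{c_2^{(1)}}{c_1^{n}}-\tfrac{c_2^{(2)}}{c_1^{n}}\Bigr)\,v_0\,v_1\cdots v_n\;\mathrm{d}x\,\mathrm{d}t = 0
\end{equation}
for all linear solutions $v_0,\dots,v_n$, some of which I am free to prescribe via the boundary input $h$ and some via the sources $(\varphi,\psi)$. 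Now I would plug in Gaussian beam solutions concentrating on a fixed null geodesic of the Lorentzian metric $-\mathrm{d}t^2+g_{c_1}$: for $n+1$ beams sharing the same geodesic, the product localises and a stationary-phase expansion in \eqref{eq:int-id-proposal} yields, to leading order, that the integral of $(c_2^{(1)}-c_2^{(2)})/c_1^n$ times a positive amplitude along the geodesic vanishes. Since $c_1$ is known (it determines the metric, hence the geodesics), this is a light-ray transform of $c_2^{(1)}-c_2^{(2)}$; invoking Assumption~I (no conjugate points or the foliation condition, giving injectivity of the geodesic ray transform as used in Theorem~\ref{thm:IP1}) forces $c_2^{(1)}=c_2^{(2)}$, hence $N_1=N_2$.

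It remains to recover the sources $\varphi,\psi$. With $N_1=N_2=:N$ now established, the full DtN maps coincide, so for \emph{every} admissible $h$ the solutions $u^{(1)},u^{(2)}$ of \eqref{eq:NPE order2} with data $(\varphi_j,\psi_j,h)$ have the same Neumann trace. Subtracting and setting $z=u^{(1)}-u^{(2)}$, one gets a linear wave equation for $z$ with a potential-type source built from $u^{(1)},u^{(2)}$ but with Cauchy data $(\varphi_1-\varphi_2,\psi_1-\psi_2)$ and vanishing Cauchy \emph{boundary} data $z|_\Sigma=\partial_\nu z|_\Sigma=0$. Taking $h\to 0$ and using the first linearisation reduces this to the linear statement: a solution of $\partial_t^2 z=L(z)$ with zero Cauchy data on $\Sigma$ must have vanishing interior initial data. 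This is precisely where Assumption~II enters: it is the standard Carleman-estimate condition (the pseudoconvexity/Lojasiewicz-type inequality on $1/c_1$) guaranteeing boundary observability for the linear wave equation over the time interval $T>\mathrm{diam}(\Omega)$, which yields $\varphi_1=\varphi_2$ and $\psi_1=\psi_2$. I expect the main obstacle to be the Gaussian beam construction and the justification of the stationary-phase asymptotics for the $(n+1)$-fold product in \eqref{eq:int-id-proposal} with variable coefficient $c_1$ — in particular controlling the remainder uniformly and ensuring the leading amplitude is non-degenerate along the chosen geodesic — together with the bookkeeping needed to realise the prescribed linear solutions simultaneously through both the boundary input and the source terms within the small-data set $\mathcal{N}_\delta$.
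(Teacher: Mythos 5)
Your proposal is correct and follows essentially the same two-ingredient strategy as the paper: the $n$-th order linearisation combined with Gaussian beams and stationary phase (under Assumption~I) to determine $N$, and boundary observability for the linear wave equation (under Assumption~II, which the paper implements via the Dehman--Ervedoza multiplier estimate rather than a Carleman estimate, though the role is identical) applied to the difference of first-order linearisations with vanishing Dirichlet and Neumann traces to determine $(\varphi,\psi)$. The paper runs the two steps in the opposite order --- sources first, then $N$ by repeating the argument of Section~\ref{sec:4} --- but nothing essential hinges on this, provided the linearisation is performed around the zero background (scaling sources and boundary data simultaneously) so that the auxiliary linear solutions entering the $n$-th order integral identity are the same for $j=1,2$. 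Two small corrections: your remark that the first linearisation ``is independent of $j$ and carries no information'' is accurate only for the linearisation in the boundary-data parameters; the linearisation in the source direction does depend on $j$, and it is exactly this piece that your final paragraph (and the paper's proof) exploits, so the two statements should be reconciled. Also, observability yields $\partial_t z(\cdot,0)=0$ and $\nabla z(\cdot,0)=0$ only, so $c_1(\varphi_1-\varphi_2)$ is merely constant; the paper pins this constant to zero using the zeroth-order compatibility condition $h(x,0)=\varphi(x)$ on $\Gamma$, a step your sketch omits and should include.
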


\section{Proof of Theorem~\ref{thm:LWP1}}\label{sec:3}

In this section, we present the proof of Theorem~\ref{thm:LWP1} on the local well-posedness of the forward NPE system \eqref{eq:NPE order2} by employing the implicit function theorem in Banach spaces.

First, we consider following auxiliary hyperbolic equation:
\begin{equation}\label{eq:linear hyper eqn}
\left\{
\begin{aligned}
	&u_{tt}-a(x,D)u={f}\ &&\text{in}\ M\\
	&u(0,x) = \varphi(x),\ u_t(0,x) = \psi(x)\ &&\text {on}\ \Omega,\\
	&u(t,x) = h(t,x)\ &&\text {in}\ \Sigma,
\end{aligned}
\right.
\end{equation}
where
\begin{equation*}
	a(x,D)v=-\sum\limits_{j,k=1}^{n}a_{2}^{jk}(x)\partial_j\partial_kv(x)+\sum\limits_{j=1}^{n}a_{1}^j(x)\partial_jv(x)+a_{0}(x)v(x)
\end{equation*}
with $\{a_{2}^{jk}\}_{j,k}$ being a symmetric matrix satisfying
\begin{equation*}
	\sum\limits_{j,k}a_{2}^{jk}p_jp_k\ge \gamma \sum\limits_{j}p_j^2,\ \ \gamma>0.
\end{equation*}
	
The global well-posedness of the hyperbolic equation (\ref{eq:linear hyper eqn}) is given by following lemma (cf.\cite[Theorem\,2.45]{KKL01}).
\begin{lemma}\label{lem:wellposedness linear hyper eqn}
Let $m$ be a positive integer and $T>0$. For any $\varphi \in H_0^{m+1}(\Omega)$, $\psi\in H_0^m(\Omega)$, $h\in\mathcal{O}^{m+1}(\Sigma)$, $a_{2}^{jk}$, $a_{1}^j$, $a_0 \in E^m $, $f\in E^m$ with $\partial_t^k {f}(\cdot,0)\in H_0^{m-k}(\Omega)$ for $k=0,1,\cdots,m-2$, if the compatibility conditions hold, the hyperbolic equation (\ref{eq:linear hyper eqn}) admits a unique solution $u\in E^{m+1}$ and $\partial_{\nu} u\in H^m(\Sigma)$.
\end{lemma}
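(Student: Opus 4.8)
The plan is to reduce the inhomogeneous boundary value problem to one with homogeneous Dirichlet data, solve the latter by the energy (Galerkin) method, and finally recover the hidden regularity of the normal derivative on $\Sigma$ by a multiplier argument. Since the coefficients $a_2^{jk}, a_1^j, a_0$ lie in $E^m$ and hence genuinely depend on $t$, the evolution is non-autonomous; I would therefore work throughout with time-differentiated energies rather than with a spectral or semigroup representation of the solution operator.

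\emph{Reduction to homogeneous boundary data.} Because $h \in \mathcal{O}^{m+1}(\Sigma)$, the extension part of the trace theorem supplies $H \in E^{m+1}$ with $H|_\Sigma = h$ and $\|H\|_{E^{m+1}} \le C\|h\|_{\mathcal{O}^{m+1}(\Sigma)}$. Setting $w = u - H$ converts \eqref{eq:linear hyper eqn} into
\[
w_{tt} - a(x,D)w = \tilde f := f - H_{tt} + a(x,D)H, \qquad w|_\Sigma = 0,
\]
with $w(0,\cdot) = \varphi - H(0,\cdot)$ and $w_t(0,\cdot) = \psi - H_t(0,\cdot)$. A direct computation shows that the $m$th-order compatibility conditions for $(\varphi,\psi,h)$ transfer to $w$: they force $(a(x,D)\tilde\varphi + \tilde f(\cdot,0))|_\Gamma = 0$ and its higher temporal analogues, i.e. precisely the corner compatibility at $\{0\}\times\Gamma$ that the homogeneous-boundary problem requires, and they guarantee $\tilde f \in E^m$ with $\partial_t^k\tilde f(\cdot,0)\in H_0^{m-k}(\Omega)$ for $k\le m-2$.

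\emph{Existence, uniqueness and interior regularity.} I would construct approximate solutions $w_N$ by Galerkin projection onto the eigenbasis of the Dirichlet Laplacian, which respects $w|_\Gamma = 0$. The basic energy identity follows on multiplying the equation by $\partial_t w$ and integrating over $\Omega$: it controls $E(t) := \tfrac12\int_\Omega\big(|w_t|^2 + \sum_{j,k} a_2^{jk}\partial_j w\,\partial_k w\big)\,dx$, which by the uniform ellipticity $\sum_{j,k} a_2^{jk}p_j p_k \ge \gamma\sum_j p_j^2$ is equivalent to $\|w_t(t)\|_{L^2}^2 + \|\nabla w(t)\|_{L^2}^2$; the first-order term $a_1^j\partial_j w$, the zeroth-order term $a_0 w$, the source $\tilde f$, and the term produced by $\partial_t a_2^{jk}$ are all lower order and are absorbed by Gronwall's inequality. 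To reach $E^{m+1}$ I would differentiate the equation $k$ times in $t$ for $k=1,\dots,m$, apply the same identity to $\partial_t^k w$ (whose initial data are supplied by the compatibility conditions), and then convert temporal into spatial regularity: writing $a(x,D)w = w_{tt} - \tilde f$ and invoking elliptic regularity for the Dirichlet problem,
\[
\|w(t)\|_{H^{j+1}} \le C\big(\|a(x,D)w(t)\|_{H^{j-1}} + \|w(t)\|_{L^2}\big),
\]
bootstraps the time-derivative bounds into full $E^{m+1}$ control. Passing to the limit $N\to\infty$ yields a solution, and applying the estimate to the difference of two solutions with vanishing data yields uniqueness.

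\emph{Hidden boundary regularity --- the main obstacle.} The claim $\partial_\nu u \in H^m(\Sigma)$ is the genuinely delicate point, since the naive trace of an $H^{m+1}$-interior solution would only place $\partial_\nu u$ in $H^{m-1/2}(\Sigma)$. I would recover the missing half-derivative by a multiplier argument of Lasiecka-Lions-Triggiani type: choosing a smooth vector field $q$ on $\overline\Omega$ with $q = \nu$ on $\Gamma$, multiplying the equation by $q\cdot\nabla w$ and integrating by parts over $M$, the boundary contribution isolates $\int_\Sigma |\partial_\nu w|^2$ (the tangential derivatives drop out because $w|_\Sigma = 0$), which is then dominated by the interior energy already controlled. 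Differentiating in the tangential and temporal directions and reapplying this identity upgrades the bound to $\partial_\nu w\in H^m(\Sigma)$; since $\partial_\nu H\in H^m(\Sigma)$ by $H\in E^{m+1}$ and the trace theorem, we conclude $\partial_\nu u = \partial_\nu w + \partial_\nu H \in H^m(\Sigma)$. Structurally this reproduces \cite[Theorem~2.45]{KKL01}, the only substantive modification being the systematic time-differentiation forced by the time-dependence of the coefficients.
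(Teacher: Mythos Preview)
The paper does not prove this lemma: it is stated with a bare citation to \cite[Theorem~2.45]{KKL01} and used as a black box in the proof of Theorem~\ref{thm:LWP1}. Your sketch is a correct outline of the standard argument (lift the boundary datum, run Galerkin with time-differentiated energies, trade temporal for spatial regularity via elliptic estimates, and recover the hidden half-derivative of $\partial_\nu u$ by a multiplier identity), and you yourself note that it reproduces the structure of the cited result. There is nothing further to compare; one minor remark is that the paper's notation $a(x,D)$ and its application to \eqref{eq:v} only ever require $x$-dependent coefficients, so your care in handling genuine $t$-dependence, while harmless, is more generality than the paper actually needs.
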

	

With the above lemma, we proceed to prove the local well-posedness of the forward NPE system \eqref{eq:NPE order2}, namely Theorem~\ref{thm:LWP1}. 

\begin{proof}[Proof of Theorem\,\ref{thm:LWP1}]

The proof consists of three steps.

\medskip

\noindent {\bf Step 1.}~Let
\begin{equation*}
\begin{aligned}
&U_0=\left\{v \in E^m;\ \partial_t^k v(0,\cdot)\in H_0^{m-k}(\Omega),\  k=0,1,\cdots,m-2\right\},\\
&U_1 = \Big\{v\in E^{m+1};\ v(0,\cdot)\in H_0^{m+1}(\Omega),\ v_t(0,\cdot)\in H_0^m(\Omega),\ v|_{\Sigma}\in\mathcal{O}^{m+1}(\Sigma),\
\partial_{\nu}v\in H^m(\Sigma), \Big.\\
&\Big. \quad \qquad  (v_{tt}-L(v))\in U_0,\ (v_{tt}-L(v)-N(v))\in U_0 \Big\},\\
&U_2=H_0^{m+1}(\Omega)\times H_0^m(\Omega)\times \mathcal{O}^{m+1}(\Sigma),\\
&U_3 = U_0\times U_2,
\end{aligned}
\end{equation*}
with
\begin{equation}\label{eq:U1 norm}
\|v\|_{U_1}=\|v\|_{E^{m+1}}+\left\|\partial_\nu v\right\|_{H^m(\Sigma)}+\|v\|_{H^{m+1}(\Sigma)}+\left\|v_{t t}-\Delta v\right\|_{E^m},
\end{equation}
and set the error operator as
\begin{equation*}
E:(v;\varphi,\psi,h)\mapsto \left(v_{tt}-L(v)-N(v);v(x,0)-\varphi(x),v_t(x,0)-\psi,v|_{\Sigma}-h\right), \, U_1\times U_2\to U_3,
\end{equation*}
where $v\in U_1$ and $(\varphi,\psi,h) \in U_2$.

In order to demonstrate that the image set of the mapping $E$ is $U_3$, it is sufficient to show that
\begin{equation*}
v_{tt}-L(v)-N(v)\in U_0,
\end{equation*}
which is equivalent to
\begin{equation*}
v_{tt}-L(v)-N(v)\in E^m,\quad \partial_t^k v(0,\cdot)(v_{tt}-L(v)-N(v))\in H_0^{m-k}(\Omega), \quad k=0,1,\cdots,m-2.
\end{equation*}

It is obvious that $v_{tt}-L(v)\in E^m$. On the other hand,
\begin{equation*}
N(v)=\Delta (c_2v^n)=c_2(n(n-1)v^{n-2}\nabla v\cdot\nabla v+nv^{n-1}\Delta v)+v^n\Delta c_2+2nv^{n-1}\nabla c_2\cdot \nabla v.
\end{equation*}
It follows from the algebraic properties of $E^m$ that
\begin{align*}
&\|c_2(n(n-1)v^{n-2}\nabla v\cdot\nabla v+nv^{n-1}\Delta v)\|_{E^m}\leq M(\|v\|^{n-2}_{E^m}\|\nabla v\|^2_{E^m}+\|v\|^{n-1}_{E^m}\|\Delta v\|_{E^m}),\\
&\|v^n\Delta c_2\|_{E^m}\leq M\|v\|^n_{E^m},\\
&\|2nv^{n-1}\nabla c_2\cdot \nabla v\|_{E^m}\leq M\|v\|^{n-1}_{E^m}\|\nabla v\|_{E^m},
\end{align*}
where $M=\max\{2\|c_2\|_{H^m(\Omega)}, \|\Delta c_2\|_{H^m(\Omega)}\}$. These estimates readily imply that $N(v)\in E^m$, and hence $v_{tt}-L(v)-N(v)\in E^m$.
In the same manner, we know that $\partial_t^k v(0,\cdot)\in H_0^{m-k}(\Omega)$ and $v_{tt}-L(v)-N(v)\in U_0$. Therefore the mapping $E$ is well-defined.

Then the equation NPE (\ref{eq:NPE order2}) can be rewritten as: for any $(\varphi,\psi,h)\in U_2$, find $u\in U_1$ such that
$E(u;\varphi,\psi,h)=(0;0,0,0)$.

\medskip

\noindent {\bf Step 2.}~We claim that the error operator $E$ is analytic with respect to each of its arguments.
		
By the result in \cite[p.133]{Pos87}, we only need to show that $E$ is locally bounded and weakly analytic.
The former can be easily verified by the definition, and the latter is equivalent to show that for
any $(u_0;\varphi_0,\psi_0,h_0)$, $(u;\varphi,\psi,h,u)\in U_1\times U_2$, the map
\begin{equation*}
		\lambda\mapsto E((\varphi_0,\psi_0,h_0,u_0)+\lambda (\varphi,\psi,h,u)),
\end{equation*}
is analytic in a neighborhood of the origin in $\mathbb{C}$.

Since any polynomial is analytic, the operators $L$ and $N$ are analytic with respect to the parameter $\lambda$.
This implies $E$ is analytic with respect to $\lambda$, and hence the claim is proved.

\medskip

\noindent {\bf Step 3.}~The Fr\'{e}chet derivative of $E$ with respect to $u$ at zero is defined by
\begin{equation*}
		E_u(0,0,0,0)v = \left(v_{tt}-\Delta (c_1 v); v(0,\cdot), v_t(0,\cdot), v|_{\Sigma}\right), \forall\, v\in U_1.
\end{equation*}
Next, we prove that $E_u$ is a linear isomorphism. That is, for any $f\in E^m$ and $(\varphi,\psi,h)\in U_2$, we can find a unique $w\in U_1$ such that
\begin{equation*}
		E_u(0,0,0,0)w=(f,\varphi,\psi,h),
\end{equation*}
which is equivalent to the following linear hyperbolic equation
\begin{equation}\label{eq:w}
\left\{
\begin{aligned}
	&w_{tt}-\Delta (c_1w)=f\ &&\text{in}\ M,\\
	&w(0,x) = \varphi(x),\ w_t(0,x) = \psi(x)\ &&\text {on}\ \Omega, \\
    &w(t,x) = h(t,x)\ &&\text {in}\ \Sigma.\\
\end{aligned}
\right.
\end{equation}

Let $v=c_1w$. Then it is readily seen that $v$ satisfies
\begin{equation}\label{eq:v}
\left\{
\begin{aligned}
	&v_{tt}-c_1\Delta v=c_1f\ &&\text{in}\ M,\\
	&v(0,x) =c_1(x) \varphi(x),\ v_t(0,x) = c_1(x)\psi(x)\ &&\text {on}\ \Omega,\\
    &v(t,x) = h(t,x)/c_1(x)\ &&\text {in}\ \Sigma.
\end{aligned}
\right.
\end{equation}
The global well-posedness of the equation (\ref{eq:v}) follows form Lemma\,\ref{lem:wellposedness linear hyper eqn},
which implies that $E_u$ is a linear isomorphism.
		
Since $E$ is analytic with respect to each of its arguments and $E_u$ is a linear isomorphism,
it follows from the Implicit Function Theorem in Banach spaces that, there exists a constant $\delta>0$ and a $C^{\infty}$
map $F:\mathcal{N}_{\delta}\to E^{m+1}$ such that for any $(\varphi,\psi ,h)\in U_1$ satisfying
\begin{equation}\label{eq:C delta}
		\|\varphi\|_{H^{m+1}(\Omega)}+\|\psi\|_{H^{m}(\Omega)}+\|h\|_{H^{m+1}(\Sigma)}<\delta,
\end{equation}
one has that $E(F(\varphi,\psi,h); \varphi, \psi, h)=(0; 0, 0, 0)$ means that $u=F(\varphi,\psi,h)$ is a solution of the equation (\ref{eq:NPE order2}).
		
Furthermore, by the fact that $u=F(\varphi,\psi,h)$, $0=F(0,0,0)$, and the local Lipschitz continuity of $F$, we have
\begin{equation*}
\|u\|_{E^{m+1}}=\|F(\varphi,\psi,h)-F(0,0,0)\|_{E^{m+1}}
\leq C(\|\varphi\|_{H^{m+1}(\Omega)}+\|\psi\|_{H^m(\Omega)}+\|h\|_{H^{m+1}(\Sigma)}).
\end{equation*}
Finally, by using the trace theorem, the Sobolev embedding theorem, and (\ref{eq:C delta}), we can obtain the estimate (\ref{eq:est u P1}).

The proof is complete. 
\end{proof}


\section{Proof of Theorem~\ref{thm:IP1}}\label{sec:4}

In this section, we present the proof of Theorem~\ref{thm:IP1}. We first consider the case $n=2$, which corresponds to the canonical NPE problem. i.e. determining the coefficients of the NPE (\ref{eq:NPE order2}) by the measurement on the boundary with the homogeneous initial conditions. We shall distinguish between two cases with $n=2$ and $n> 2$ in \eqref{eq:NPE order2}.
Here, we determine the coefficient $c_1$ in the linear term based on the first order linearization technique.
Then the second order linearization technique and the construction of Gaussian beam solutions are used to determine the coefficient $c_2$ in the nonlinear term.


\subsection{First-order linearization}\label{sec:first-order}

For the purpose of linearizing the solution $u$ of equation (\ref{eq:NPE order2}), we first consider the following boundary data
\begin{equation*}
	h(t,x) = \varepsilon_1h^{(1)}(t,x),
\end{equation*}
where $\varepsilon_1\|h^{(1)}\|_{H^{m+1}(\Sigma)}\leq \delta$ with $\delta$ being defined in Theorem\,\ref{thm:LWP1} and $h^{(1)}\in H^{m+1}(\Sigma)$. Then the associated solution $u(x,t,\varepsilon_1)$ yields
\begin{equation}\label{eq:u eps1}
\left\{
\begin{aligned}
&u_{tt}(t,x,\varepsilon_1)-\Delta \left[c_1u(t,x,\varepsilon_1)+c_2u^2(t,x,\varepsilon_1)\right] =0 \ &&\text{in}\ M,\\
&u(0,x,\varepsilon_1)=u_t(0,x,\varepsilon_1) = 0\ &&\text{in}\ \Omega,\\
&u(t,x,\varepsilon_1)=\varepsilon_1 h^{(1)}\ &&\text{on}\ \Sigma.
\end{aligned}
\right.
\end{equation}	
Write $u(t,x,\varepsilon_1)$ as $u$ for short and set the linearization part of $u$ as
\begin{equation*}
u^{(1)}=\frac{\partial u}{\partial{\varepsilon_1}}\left|_{\varepsilon_1=0}\right.,
\end{equation*}
which satisfies
\begin{equation}\label{eq:uj}
\left\{
\begin{aligned}
&u^{(1)}_{tt} -\Delta (c_1u^{(1)})=0\ &&\text{in}\ M,\\
&u^{(1)}(0,x) = u^{(1)}_t(0,x) = 0\ &&\text{in}\ \Omega,\\
&u^{(1)}(t,x)= h^{(1)}(t,x)\ &&\text{on}\ \Sigma.
\end{aligned}
\right.
\end{equation}
In the same manner to (\ref{eq:kappa def}), the linearization part of the DtN mapping $\Lambda_{L,N}$ for (\ref{eq:u eps1}) is given by
\begin{equation}\label{eq:Lambda u}
\Lambda^{(u,1)}_{L,N}:u\left|_{\Sigma}\right.~\mbox{or}~h^{(1)} \mapsto \frac{\partial\Lambda_{L,N}}{\partial\varepsilon_1}(\varepsilon_1h^{(1)})\left|_{\varepsilon_1=0}\right.
=\nu\cdot D^L(u).
\end{equation}
Let $v^{(1)}=c_1u^{(1)}$. Then (\ref{eq:uj}) and (\ref{eq:Lambda u}) can be rewritten as
\begin{equation}\label{eq:vj}
\left\{
\begin{aligned}
&v^{(1)}_{tt}-c_1\Delta v^{(1)}=0\ &&\text{in}\ M,\\
&v^{(1)}(0,x) = v^{(1)}_t(0,x) = 0\ &&\text{in}\ \Omega,\\
&v^{(1)}(t,x)=c_1 h^{(1)}(t,x)\ &&\text{on}\ \Sigma,
\end{aligned}
\right.
\end{equation}
and
\begin{equation}\label{eq:Lambda v}
\Lambda^{(v,1)}_{L,N}:c_1h^{(1)}=v^{(1)}\left|_{\Sigma}\right.\mapsto \frac{\partial \Lambda_{L,N}}{\partial\varepsilon_1}(\varepsilon_1h^{(1)})\left|_{\varepsilon_1=0}\right.
=c_1\frac{\partial v^{(1)}}{\partial\nu},
\end{equation}
where we use the third condition from the admissible coefficient conditions in the last equality.

Combining (\ref{eq:Lambda u}) and (\ref{eq:Lambda v}), we obtain
\begin{equation*}
\Lambda^{(v,1)}_{L,N}=\Lambda^{(u,1)}_{L,N}\circ c_1^{-1},
\end{equation*}
which together with the fact that $c_1$ satisfies the admissible coefficient conditions readily gives that
\begin{equation}\label{eq:u implies v}
\Lambda^{(u,1)}_{L_1,N}=\Lambda^{(u,1)}_{L_2,N} \Rightarrow \Lambda^{(v,1)}_{L_1,N}=\Lambda^{(v,1)}_{L_2,N}.
\end{equation}

On the other hand, it is known from \cite{Bel87} that $c_1$ is uniquely determined by knowledge of the NtD map. 
Meanwhile, based on the well-posedness of the linear hyperbolic equation, we know that the DtN map and NtD map are equivalent.
This implies the uniqueness of determining $c_1$ by using the DtN mapping $\Lambda^{(v,1)}_{L,N}$, as well as the DtN mapping $\Lambda^{(u,1)}_{L,N}$.


\subsection{Second order linearization}

In the previous subsection, we use the first order linearization and the DtN map to recvoer $c_1$. Now we shall make use the
second order linearization and the DtN map to determine the coefficient $c_2$ from the nonlinear term.

Consider the following boundary data
\begin{equation*}
	h(t,x) = \varepsilon_1 h^{(1)}+\varepsilon_2 h^{(2)},
\end{equation*}
where $\varepsilon_1\|h^{(1)}\|_{H^{m+1}(\Sigma)}+\varepsilon_2\|h^{(2)}\|_{H^{m+1}(\Sigma)}\leq \delta$ with $\delta$ being defined in Theorem\,\ref{thm:LWP1} and $h^{(1)}$, $h^{(2)}\in H^{m+1}(\Sigma)$. Then by direct calculations, the associated solution $u(t,x,\varepsilon_1,\varepsilon_2)$ satisfies 
\begin{equation}\label{eq:u eps2}
\begin{cases}
\partial_t^2 u\left(t,x, \varepsilon_1, \varepsilon_2\right)-\Delta\left[c_1 u\left(x, t, \varepsilon_1, \varepsilon_2\right)+c_2 u^2\left(t,x, \varepsilon_1, \varepsilon_2\right)\right]=0 & \text { in } M, \\
u\left(0,x, \varepsilon_1, \varepsilon_2\right)=\partial_t u\left(0,x, \varepsilon_1, \varepsilon_2\right)=0 & \text { in } \Omega, \\
u\left(t,x, \varepsilon_1, \varepsilon_2\right)=\varepsilon_1 h^{(1)}+\varepsilon_2 h^{(2)} & \text { on } \Sigma .
\end{cases}
\end{equation}

Next, we let $v(t,x,\varepsilon_1,\varepsilon_2)=c_1u(t,x,\varepsilon_1,\varepsilon_2)$. Then it is directly verified that $v(t,x,\varepsilon_1,\varepsilon_2)$ satisfies
\begin{equation}\label{eq:v eps2}
\begin{cases}
\partial_t^2 v\left(t,x, \varepsilon_1, \varepsilon_2\right)-c_1\Delta\left[v\left(x, t, \varepsilon_1, \varepsilon_2\right)+\frac{c_2}{c_1^2} v^2\left(t,x, \varepsilon_1, \varepsilon_2\right)\right]=0 & \text { in } M, \\
v\left(0,x, \varepsilon_1, \varepsilon_2\right)=\partial_t v\left(0,x, \varepsilon_1, \varepsilon_2\right)=0 & \text { in } \Omega, \\
v\left(t,x, \varepsilon_1, \varepsilon_2\right)=c_1\left(\varepsilon_1 h^{(1)}+\varepsilon_2 h^{(2)}\right) & \text { on } \Sigma .
\end{cases}
\end{equation}
Write $v(t,x,\varepsilon_1,\varepsilon_2)$ as $v$ for short, and set
\begin{equation*}
v^{(12)}=\frac{\partial^2v}{\partial\varepsilon_1\partial\varepsilon_2}
\left|_{\varepsilon_1=\varepsilon_2=0}\right.,
\end{equation*}
which satisfies
\begin{equation}\label{eq:u12}
\left\{
\begin{aligned}
&v^{(12)}_{tt}-c_1\Delta v_1 =c_1\nabla \cdot G(v^{(1)},v^{(2)}) \ &&\text{in}\ M,\\
&v^{(12)}(0,x) = v^{(12)}_t(0,x) = 0                              \ &&\text{in}\ \Omega,\\
&v^{(12)}(t,x)= 0                                                 \ &&\text{on}\ \Sigma,
\end{aligned}
\right.
\end{equation}
where $G(v^{(1)},v^{(2)})=2\nabla (\frac{c_2}{c_1^2}v^{(1)}v^{(2)})$. For $j=1,2$, we set $v^{(j)}$ to be the solution of
\begin{equation}\label{eq:vj}
\left\{
\begin{aligned}
&v^{(j)}_{tt}-c_1\Delta v^{(j)}=0\ &&\text{in}\ M,\\
&v^{(j)}(0,x) = v^{(j)}_t(0,x) = 0\ &&\text{in}\ \Omega,\\
&v^{(j)}(t,x)=c_1 h^{(j)}(t,x)\ &&\text{on}\ \Sigma.
\end{aligned}
\right.
\end{equation}
In the same manner to (\ref{eq:kappa def}), the second order linearization part of the DtN map $\Lambda_{L,N}$ for (\ref{eq:u eps2}) can be computed and is given by
\begin{equation}\label{eq:kappa u12}
\Lambda^{(v,12)}_{L,N}:v\left|_{\Sigma}\right.~\mbox{or}~(h^{(1)},h^{(2)}) \mapsto  \frac{\partial^2 \Lambda_{L,N}}{\partial\varepsilon_1\partial\varepsilon_2}(\varepsilon_1h^{(1)}+\varepsilon_2h^{(2)})
\left|_{\varepsilon_1=\varepsilon_2=0}\right.=c_1\nu\cdot\left[\nabla v^{(12)}+G(v^{(1)},v^{(2)})\right].
\end{equation}
On the other hand, we consider the backward NPE equation
\begin{equation}\label{eq:dual w}
\left\{
\begin{aligned}
&w_{tt}-\nabla\cdot D^L(w)=0\ &&\text{in}\ M,\\
&w(T,x) = w_t(T,x) = 0\ &&\text{in}\ \Omega,\\
&w(t,x)= g(t,x)\ &&\text{on}\ \Sigma,
\end{aligned}
\right.
\end{equation}
where $g\in L^2(\Sigma)$. It follows from (\ref{eq:kappa u12}), (\ref{eq:u12}), integration by parts, and the fact that $v^{(12)}$ vanishes on the boundary that
\begin{align*}
&\int_0^T\int_{\partial\Omega}g\left[\Lambda^{(v,12)}_{L,N}(h^{(1)},h^{(2)})-c_1\nu\cdot G(v^{(1)},v^{(2)})\right]\mathrm{d}s\mathrm{d}t\\
=&\int_0^T\int_{\partial\Omega}c_1g\nu\cdot\nabla v^{(12)}\,\mathrm{d}s\mathrm{d}t\\
=&\int_0^T\int_{\Omega}\left[c_1w\Delta v^{(12)}+\nabla (c_1w)\cdot \nabla(v^{(12)})\right]\mathrm{d}x\mathrm{d}t\\
=&\int_0^T\int_{\Omega}\left[w(v^{(12)}_{tt}-c_1\nabla\cdot G(v^{(1)},v^{(2)}))+\nabla (c_1w)\cdot \nabla (v^{(12)})\right]\mathrm{d}x\mathrm{d}t\\
=&\int_0^T\int_{\Omega}\left[v^{(12)}w_{tt}+\nabla (c_1w)\cdot \nabla (v^{(12)})\right]\mathrm{d}x\mathrm{d}t-\int_{0}^T\int_{\Omega}c_1w\nabla\cdot G(v^{(1)},v^{(2)}))\, \mathrm{d}x\mathrm{d}t\\
=&\int_0^T\int_{\Omega}\left[v^{(12)}\Delta (c_1w)+\nabla (c_1w)\cdot \nabla (v^{(12)})\right]\mathrm{d}x\mathrm{d}t-\int_{0}^T\int_{\Omega} c_1w\nabla\cdot G(v^{(1)},v^{(2)}))\, \mathrm{d}x\mathrm{d}t\\
=&\int_0^T\int_{\partial \Omega}v^{(12)}\nu\cdot \nabla (c_1w)\mathrm{d}s\mathrm{d}t-\int_{0}^T\int_{\Omega} c_1w\nabla\cdot G(v^{(1)},v^{(2)}))\, \mathrm{d}x\mathrm{d}t\\
=&-\int_{0}^T\int_{\Omega} c_1w\nabla\cdot G(v^{(1)},v^{(2)}))\,\mathrm{d}x\mathrm{d}t.
\end{align*}
Therefore, we have
\begin{equation*}
\begin{aligned} &\int_0^T\int_{\partial\Omega}g\Lambda^{(v,12)}_{L,N}(h^{(1)},h^{(2)})\,\mathrm{d}s\mathrm{d}t\\
=&-\int_0^T\int_{\Omega}c_1w\nabla\cdot G(v^{(1)},v^{(2)})\,\mathrm{d}x\mathrm{d}t+\int_0^T\int_{\partial\Omega}c_1g\nu\cdot  G(v^{(1)},v^{(2)})\,\mathrm{d}s\mathrm{d}t\\
=&\int_0^T\int_{\Omega}\nabla (c_1w)\cdot G(v^{(1)},v^{(2)})\,\mathrm{d}x\mathrm{d}t,
\end{aligned}
\end{equation*}
which in turn implies
\begin{equation}\label{eq:c2}
2\int_0^T\int_{\Omega}\nabla (c_1w)\cdot \nabla \left(\frac{c_2}{c_1^2}v^{(1)}v^{(2)}\right)\,\mathrm{d}x\mathrm{d}t=\int_0^T\int_{\partial\Omega}g\Lambda^{(v,12)}_{L,N}(h^{(1)},h^{(2)})\,\mathrm{d}s\mathrm{d}t.
\end{equation}
Next, we shall recover the parameter $c_2$ from the integral (\ref{eq:c2}) by choosing Gaussian beam solutions for $w$ and $v^{(j)}$, $j=1,2$.
In particular, the solution $v$ will be constructed dependent on the linear coefficient $c_1$ which has been identified in the previous subsection.


\subsection{Gaussian beam solution}\label{sec:GB soln}
Consider the following equation
\begin{equation}\label{eq:u GB eqn}
\left\{
\begin{aligned}
&u_{tt}-c_1\Delta u=0\ &&\text{in}\ M,\\
&u(0,x) = u_{t}(0,x) = 0\ &&\text{in}\ \Omega,
\end{aligned}
\right.
\end{equation}
and assume its Gaussian beam solution $u_{\lambda}$ has the following form:
\begin{equation}\label{eq:u GB}
u_{\lambda}(t,x)=e^{\mathrm{i}\lambda \varphi(t,x)}a(t,x)+R_{\lambda}(t,x),
\end{equation}
where $\varphi$ is the phase function, $a$ is the amplitude function, and $R_{\lambda}$ is the residue function with $\lambda$ large enough.
Furthermore, we assume that $M=[0,T]\times \Omega$ is a Lorentzian manifold, and let $\Omega\Subset \widetilde{\Omega}$ with $c_1,\ c_2$ being smoothly extended onto $\widetilde{\Omega}$. Then $\widetilde{M}=[0,T] \times \widetilde{\Omega}$ is also a Lorentzian manifold.
	
Here, we would like to point out that $(\Omega,c_1^{-1}\mathrm{d}s^2)$ either has no conjugate points or satisfies the foliation condition, and hence every geodesic hits the boundary in a finite time (cf.\cite{PSU23,PSUZ19}).


\subsection{Fermi coordinates}\label{sec:Fermi coordinates}

To facilitate the construction of the Gaussian beam solutions, we next introduce the Fermi coordinates in order to calculate the phase function as well as the amplitude function in \eqref{eq:u GB}. Here, we shall establish a Fermi coordinate in the neighborhood of each null geodesic $\mathcal{V}$. Let $\mathcal{V}(t) = (t,\gamma(t))$, where $\gamma$ is the unit-speed geodesic in the Riemannian manifold $(\Omega, c_1^{-1}ds^2)$. Moreover, there exists a point $t_0\in (0,T)$ such that $\gamma(t_0)=x_0\in\Omega$ or $(x_0,t_0)\in M$. Based on {\bf Assumption I}, $\mathcal{V}$ can reach the boundary by moving forward or backward along the geodesic within a finite time (cf.\cite{PSU23,PSUZ19}), i.e. there exist $t_{-}$ and $t_{+}\in(0,T)$ such that $\gamma(t_{-})$ and $\gamma(t_{+})\in\partial \Omega$. For any $\varepsilon>0$, we extend the geodesic $\mathcal{V}$ to $\widetilde{M}=[0,T]\times\widetilde{\Omega}$, such that $\mathcal{V}$ is well-defined over the interval $[t_{-}-\varepsilon, t_{+}+\varepsilon]$.

Next, we introduce the Fermi coordinates following \cite{FIKO21,KLOU22,UW20}. Given $\gamma(t_0)=x_0\in\Omega$, let $T_{\gamma(t_0)}\Omega=T_{x_0}\Omega$ be the tangent space at $x_0$ and $\left\{\dot{\gamma}\left(t_0\right), \alpha_2, \alpha_3\right\}$ be the associated orthonormal basis.
Let $s$ be an arc length along $\gamma$ from $x_0$ where $s$ can be positive or negative, and $(\gamma(t_0+s),t_0+s)=\mathcal{V}(t_0+s)$. For $k=2,3$, let $e_{k}(s)\in T_{\gamma(t_0+s)}\Omega$ be the parallel transport of $\alpha_k$ along $\gamma$ to the point
$\gamma(t_0 + s)$, then $\left\{\dot{\gamma}\left(t_0+s\right), e_{2}(s), e_{3}(s)\right\}$ is the associated orthonormal basis at $\gamma(t_0 + s)$.
In particular, $e_{2}(0)=\alpha_2$ and $e_{3}(0)=\alpha_3$.

Define the coordinate $(y^1=s,y^2,y^3)$ through $\mathcal{F}_1:\mathbb{R}^3\to \widetilde{\Omega}$:
\begin{equation*}
\mathcal{F}_1(s,y^2,y^3)=\exp_{\gamma(t_0+s)}(y^2e_2(s)+y^3e_3(s)),
\end{equation*}
and	in the new coordinate, we have
\begin{equation*}
g\left|_{\gamma}\right.=\sum\limits_{j=1}^3(\mathrm{d}y^j)^2,\ \text{and}\ \frac{\partial g_{jk}}{\partial y^i}\left|_{\gamma}\right.=0,\ 1\leq i,j,k\leq 3,
\end{equation*}
Then the Euclidean metric $g_E$ takes the form
\begin{equation*}
	g_{E}=\sum\limits_{1\leq i,j\leq 3}c_1g_{ij}\mathrm{d}y^i\mathrm{d}y^j.
\end{equation*}

For any given $\varepsilon>0$ and $\delta>0$, the neighborhood (or tube) of the geodesic $\mathcal{V}$ is denoted by $V_{\varepsilon,\delta}$ as
\begin{equation*}
V_{\varepsilon,\delta}= \left\{(\tau,z')\in\widetilde{M};~ \tau\in \left[t_{-}-\frac{\varepsilon}{\sqrt{2}},t_{+}+\frac{\varepsilon}{\sqrt{2}},~|z'|<\delta\right]\right\}.
\end{equation*}
On the Lorentzian manifold $(\widetilde{M},-\mathrm{d}t^2+g)$ and along $V_{\varepsilon,\delta}$, the Fermi coordinates are defined as
\begin{equation*}
	z^0 = \tau = \frac{1}{\sqrt{2}}(t-t_0+s),\ z^1 = r = \frac{1}{\sqrt{2}}(-t+t_0+s),\ z^j=y^j,\ j=2,3.
\end{equation*}
Let $\tau_{\pm}=\sqrt{2}(t_{\pm}-t_0)$, then $\bar{g}=-\mathrm{d}t^2+g$ satisfies the following conditions
\begin{equation}
	\bar{g}\left|_{\mathcal{V}}\right.=2\mathrm{d}\tau \mathrm{d}r+\sum\limits_{j=2}^3(\mathrm{d}z^j)^2\ \text{and}\ \frac{\partial \bar{g}_{jk}}{\partial y^i}\left|_{\gamma}\right.=0,\ 1\leq i,j,k\leq 3.
\end{equation}
For short, $z =(\tau,z')=(\tau=z^0,r = z^1,z'')$ and $y = (s=y^1,y'')$, where $z'=(r,z^2,z^3)$, $z''=(z^2,z^3)$, and $y''=(y^2,y^3)$.
	

\subsection{Construction of Gaussion beams}\label{sec:Con GB}
Let the amplitude function of Gaussian beam solution \eqref{eq:u GB} has the following expansion:
\begin{equation}\label{eq:phi a expand}
a(\tau,z')=\chi\left(\frac{|z'|}{\delta}\right)\sum\limits_{k=0}^{N}\lambda^{-k}a_k(\tau,z'),
\end{equation}
where $N$ is a positive integer, $\delta$ is the small parameter, and $\chi:\mathbb{R}\to[0,+\infty)$ satisfies
\begin{equation}
\chi(v)=\left\{
\begin{aligned}
	&1,\ \text{when}\ |v|\leq \frac{1}{4},\\
	&0,\ \text{Otherwise}.
\end{aligned}
\right.
\end{equation}
Based on the exponential decay property of the Gaussian function, $\delta$ is small enough to guarantee that $a|_{t=0}=a|_{t=T}=0$.

For simplicity, we use the following notations for abbreviations
\begin{equation*}
\varphi_t = \frac{\partial \varphi}{\partial t},\ \varphi_{;j}=\frac{\partial\varphi}{\partial z_j},\
\varphi_{;jm}=\frac{\partial^2 \varphi}{\partial z_j\partial z_m},\ \varphi_{j;m}=\frac{\partial \varphi_j}{\partial z_m},\ j,m=1,2,3,  \text{and}\
\sum\limits_{j=1}^{3}\sum\limits_{m=1}^{3}a_{jm}b^{jm}=a_{jm}b^{jm}.
\end{equation*}
	
Next, we will rewrite the equation (\ref{eq:u GB eqn}) under the Fermi coordinate on $V_{\varepsilon,\delta}$. Substituting
(\ref{eq:u GB}) into (\ref{eq:u GB eqn}), we obtain
\begin{equation}\label{eq:u lambda tt}
	(u_{\lambda})_{tt} = e^{\mathrm{i}\lambda \varphi}[-\lambda^2a(\varphi_t)^2+\mathrm{i}\lambda(2a_t\varphi_t+a\varphi_{tt})+a_{tt}]+(R_{\lambda})_{tt}.
\end{equation}
By direct calculations, we have
\begin{align*}
(\nabla (u_{\lambda}))_{j}=(\nabla (e^{\mathrm{i}\lambda\varphi}a))_j=e^{\mathrm{i}\lambda\varphi}\left[\mathrm{i}\lambda a\varphi_{;j}+a_{;j}\right]+(\nabla(R_{\lambda}))_{j},
\end{align*}
and
\begin{align*}
&(\nabla (u_{\lambda}))_{j;m}=\left\{e^{\mathrm{i}\lambda\varphi}\left[\mathrm{i}\lambda a\varphi_{;j}+a_{;j}\right]\right\}_{;m}+(\nabla R_{\lambda})_{j;m}\\
=&e^{\mathrm{i}\lambda\varphi}\left\{-\lambda^2a\varphi_{;j}\varphi_{;m}\right.+\mathrm{i}\lambda\left[a_{;j}\varphi_{;m}+a_{;m}\varphi_{;j}+a\varphi_{;jm}\right]+a_{;jm}\left.\right\}+(\nabla R_{\lambda})_{j;m},
\end{align*}
which implies
\begin{equation}\label{eq:DL u}
\begin{aligned}
c_1\Delta (u_{\lambda}) &= c_1(\nabla (u_{\lambda}))_{j;m}g^{jm}c_1^{-1}\\
&=-\lambda^2e^{\mathrm{i}\lambda \varphi}(a\varphi_{;j}\varphi_{;m})g^{jm}\\
&+\mathrm{i}\lambda e^{\mathrm{i}\lambda\varphi}(a_{;j}\varphi_{;m}+a_{;m}\varphi_{;j}+a\varphi_{;jm})+e^{\mathrm{i}\lambda\varphi}(a_{;jm})g^{jm}+\nabla (R_{\lambda})_{j;m}g^{jm},
\end{aligned}
\end{equation}
where $\{g^{jm}\}$ is the inverse matrix of $\{g_{jm}\}$. Hence, the residue function $R_{\lambda}$ satisfies
\begin{equation}\label{eq:R lambda}
\left\{
\begin{aligned}
&(R_{\lambda})_{tt}-c_1\Delta R_{\lambda}=F_{\lambda}\ &&\text{in}\ M,\\
&R_{\lambda}(0,x) = (R_{\lambda})_{t}(0,x)=0\ &&\text{in}\ \Omega,\\
&R_{\lambda}=0\  && \text{on}\ \Sigma,\\
\end{aligned}
\right.
\end{equation}
where $F_{\lambda}=-(u_{\lambda})_{tt}+c_1 \Delta u_{\lambda}$.

Substituting (\ref{eq:phi a expand}), (\ref{eq:u lambda tt}), and (\ref{eq:DL u})
into (\ref{eq:R lambda}), it yields
\begin{equation*}
(u_{\lambda})_{tt}-c_1 \Delta u_{\lambda}=e^{\mathrm{i}\lambda\varphi}\left(\lambda^2 I_1+\sum\limits_{k=0}^N\lambda^{1-k} I_{k+2}+\mathcal{O}(\lambda^{-N})\right),
\end{equation*}
where $\{I_{k}\}_{1}^{N+2}$ are coefficients to be determined. Moreover, the following expansions hold by direct computations
\begin{equation*}
(u_{\lambda})_{tt} = e^{\mathrm{i}\lambda \varphi}\left[-\lambda^2\varphi_t^2a_0+\lambda\left(2\mathrm{i}\varphi_t(a_{0})_t+\mathrm{i}a_0\varphi_{tt}-\varphi_t^2a_1\right)\right]+\mathcal{O}(1),
\end{equation*}
and
\begin{equation*}
\begin{aligned}
c_1\Delta u_{\lambda}=\lambda^2e^{\mathrm{i}\lambda\varphi}(-a_0\varphi_{;j}\varphi_{;m})g^{jm}+\lambda e^{\mathrm{i}\lambda \varphi}\left[\mathrm{i}\left(a_{0;j}\varphi_{;m}+a_{0;m}\varphi_{;j}+a_0\varphi_{;jm}\right)-a_1\varphi_{;j}\varphi_{;m}\right]g^{jm}+\mathcal{O}(1).
\end{aligned}
\end{equation*}
Comparing the coefficients, we have
\begin{equation*}
I_1 = a_0\varphi_{;j}\varphi_{;m}g^{jm}-a_0(\varphi_t)^2,
\end{equation*}
and
\begin{align}
\label{eq:I2}
I_2 =& a_1\varphi_{;j}\varphi_{;m}g^{jm}-a_1(\varphi_t)^2+\mathrm{i}(2a_{0;t}\varphi_t+a_0\varphi_{tt})-\mathrm{i}(a_{0;m}\varphi_{;j}+a_0\varphi_{;jm}+\varphi_{;m}a_{0;j})g^{jm}.
\end{align}
Following \cite{UZ21b}, the coefficients $\{I_{k}\}_{1}^{N+2}$ should satisfy the following requirements
\begin{equation}\label{eq:cond I}
\frac{\partial^{|\alpha|}}{\partial z^{\alpha}}I_k=0\ \text{on}\ \mathcal{V}, \quad 1\leq k\leq N+2,
\end{equation}
where $\alpha = (0,\alpha_1,\alpha_2,\alpha_3)$, $\alpha_j \geq 0$, $j=1,2,3$, and $|\alpha|=\sum\limits_{j=1}^{3}\alpha_j\leq N$.
Therefore, $R_{\lambda}$ has the following estimate (cf.\cite{FIKO21,UZ21a})
\begin{equation*}
\|R_{\lambda}\|_{W^{1,3}(M)}=\mathcal{O}(\lambda^{-1/2}).
\end{equation*}
	

\subsection{Construction of the phase function}\label{sec:phase function}

We now construct phase function $\varphi$. Let
\begin{equation}\label{eq:def S}
	S\varphi:=-(\partial_t\varphi)^2+c_1|\nabla\varphi|^2.
\end{equation}	
Consider the point $\gamma(t_0)$, given the associated Fermi coordinates $z=(\tau,z^1,z^2,z^3)$, the phase function satisfies
\begin{equation}\label{eq:cond S}
	\frac{\partial^{|\alpha|}}{\partial z^{\alpha}}(S\varphi)(\tau,0,0,0)=0,
\end{equation}	
for any $\alpha=(0,\alpha_1,\alpha_2,\alpha_3)$ with $|\alpha|\leq N$.

Let $\tilde{g} = -\mathrm{d}t^2+g_{c_1}$, where $g_{c_1}$ is given by (\ref{eq:gc1}), then (\ref{eq:def S}) and (\ref{eq:cond S}) can be rewritten as
\begin{eqnarray*}
S\varphi = \langle \mathrm{d}\varphi,\mathrm{d}\varphi\rangle_{\tilde{g}} \quad \mbox{and} \quad
\frac{\partial^{|\alpha|}}{\partial z^{\alpha}}\langle \mathrm{d}\varphi,\mathrm{d}\varphi\rangle_{\tilde{g}} =0~~ \text{on}~~V_{\epsilon,\delta},
\end{eqnarray*}
where $\langle \cdot,\cdot\rangle_{\tilde{g}}$ is the inner production on $\mathcal{V}$.

Suppose $\varphi=\sum\limits_{k=0}^N\varphi_k(\tau,z')$ with $\varphi_k$ being complex valued homogeneous polynomials of degree $k$,
and the first three terms are given by
\begin{eqnarray*}
	\varphi_0=0,\ \varphi_1 = \frac{-t+t_0+s}{\sqrt{2}},\ \varphi_2 = \sum\limits_{1\leq i,j\leq 3}H_{ij}(\tau)z^iz^j,
\end{eqnarray*}
where $H$ is symmetry, $\Im H(\tau)>0$, and satisfies the following Ricatti ODE:
\begin{eqnarray}\label{eq:Ricatti ODE}
\frac{\mathrm{d}}{\mathrm{d}\tau}H+HCH+D=0,\quad \tau\in(\tau_{-}-\frac{\varepsilon}{2},
\tau_{+}+\frac{\varepsilon}{2}), \quad H(0)=H_0.
\end{eqnarray}
Here, $C$ and $D$ are given by
\begin{align}\label{eq:def CD}
C=	\begin{pmatrix}
0 & 0 & 0 \\
0 & 2 & 0 \\
0 & 0 & 2
\end{pmatrix},\
D=\frac{1}{4}\begin{pmatrix}
\frac{\partial^2}{\partial_{1}^2} & \frac{\partial^2}{\partial_{1}\partial_{2}} & \frac{\partial^2}{\partial_{1}\partial_{3}} \\
\frac{\partial^2}{\partial_{2}\partial_{1}} & \frac{\partial^2}{\partial_{2}^2} & \frac{\partial^2}{\partial_{2}\partial_{3}} \\
\frac{\partial^2}{\partial_{3}\partial_{1}} & \frac{\partial^2}{\partial_{3}\partial_{2}} & \frac{\partial^2}{\partial_{3}^2}
\end{pmatrix}g^{11},
\end{align}
where $g^{11}$ is the first entry of the inverse matrix $\{g^{jm}\}$. 	

The following lemma shows the existence of a solution to the Ricatti equation (\ref{eq:Ricatti ODE}).
\begin{lemma}\label{lem:existence}(cf.\cite{FIKO21})
Let $Y(\tau)$ and $Z(\tau)$ solve the ODEs
\begin{align*}
	&\frac{\mathrm{d}}{\mathrm{d}\tau}Y(\tau)=CZ(\tau),\quad Y(0)=Y_0,\\
    &\frac{\mathrm{d}}{\mathrm{d}\tau}Z(\tau)=-D(\tau)Y(\tau),\quad Z(0)=Y_1 = H(0)Y_0,
\end{align*}
respectively. Then the Ricatti equation (\ref{eq:Ricatti ODE}) has a unique solution $H(\tau) = Z(\tau)(Y(\tau))^{-1}$.
Moreover, $H$ is symmetric and $\Im H(\tau)>0$ for all $\tau\in(\tau_{-}-\frac{\varepsilon}{2}, \tau_{+}+\frac{\varepsilon}{2})$.

In addition, $Y(\tau)$ is non-degenerate and
\begin{equation*}
		det(\Im H(\tau))|det(Y(\tau))|^2=C,
\end{equation*}
where $C$ is a constant independent of $\tau$.
\end{lemma}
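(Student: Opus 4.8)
The plan is to linearise the Riccati equation \eqref{eq:Ricatti ODE} via the substitution $H=ZY^{-1}$ and then transfer every required qualitative property of $H$ to conserved bilinear quantities of the linear pair $(Y,Z)$. Normalise $Y_0=I$, so that $Z(0)=H(0)=H_0$ (the general case is obtained by conjugating the conserved forms below by the invertible $Y_0$). Since $C$ is constant and $D(\tau)$ is smooth on the interval $I_\varepsilon:=(\tau_--\tfrac{\varepsilon}{2},\tau_++\tfrac{\varepsilon}{2})$ — it is assembled from the second derivatives of the inverse-metric coefficient $g^{11}$ along $\gamma$, which are smooth because $c_1$ is smooth — the linear system for $(Y,Z)$ has a unique solution on all of $I_\varepsilon$ by the standard theory of linear ODEs (no finite-time blow-up on a bounded interval with bounded coefficients). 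A direct differentiation then shows that, wherever $Y$ is invertible, $H:=ZY^{-1}$ satisfies $H'=Z'Y^{-1}-ZY^{-1}Y'Y^{-1}=-DYY^{-1}-ZY^{-1}CZY^{-1}=-D-HCH$, i.e. \eqref{eq:Ricatti ODE}, with $H(0)=Z(0)Y(0)^{-1}=H_0$; uniqueness for \eqref{eq:Ricatti ODE} is immediate from the polynomial (hence locally Lipschitz) right-hand side.

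The crux is to prove that $Y(\tau)$ is non-degenerate on all of $I_\varepsilon$, since only then is $H$ defined throughout. I would introduce the Hermitian form $\mathcal{W}(\tau):=Y(\tau)^{\ast}Z(\tau)-Z(\tau)^{\ast}Y(\tau)$ and compute, using that $C$ and $D$ are real symmetric, that $\mathcal{W}'(\tau)=(CZ)^{\ast}Z-Y^{\ast}DY+(DY)^{\ast}Y-Z^{\ast}CZ=0$; hence $\mathcal{W}(\tau)\equiv\mathcal{W}(0)=H_0-H_0^{\ast}=2\mathrm{i}\,\Im H_0$, where $H_0^T=H_0$ is used. If $Y(\tau_\ast)v=0$ for some $\tau_\ast\in I_\varepsilon$ and some $v\neq0$, then $v^{\ast}\mathcal{W}(\tau_\ast)v=(Y(\tau_\ast)v)^{\ast}(Z(\tau_\ast)v)-(Z(\tau_\ast)v)^{\ast}(Y(\tau_\ast)v)=0$, contradicting $v^{\ast}(2\mathrm{i}\,\Im H_0)v\neq0$ because $\Im H_0>0$. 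Therefore $\det Y(\tau)\neq0$ on $I_\varepsilon$. I expect this positivity-propagation step to be the main obstacle: it is where the hypothesis $\Im H_0>0$ enters decisively, and where the real-symmetry of both $C$ and $D$ must be exploited carefully so that the cross-terms in $\mathcal{W}'$ cancel.

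With non-degeneracy of $Y$ in hand, the remaining assertions follow from two further conserved quantities. First, the form $\mathcal{S}(\tau):=Y^T Z-Z^T Y$ satisfies $\mathcal{S}'=0$ by the same cancellation, and $\mathcal{S}(0)=H_0-H_0^T=0$; hence $Y^T Z=Z^T Y$, and writing $Z^T=Y^T Z Y^{-1}$ gives $H^T=(Y^T)^{-1}Z^T=ZY^{-1}=H$, so $H$ is symmetric. Second, using the symmetry of $H$ one has $Y^{\ast}(\Im H)Y=\tfrac{1}{2\mathrm{i}}\,Y^{\ast}\big(ZY^{-1}-(Y^{\ast})^{-1}Z^{\ast}\big)Y=\tfrac{1}{2\mathrm{i}}(Y^{\ast}Z-Z^{\ast}Y)=\tfrac{1}{2\mathrm{i}}\mathcal{W}=\Im H_0$, and since $Y$ is invertible this congruence shows $\Im H(\tau)>0$ for all $\tau\in I_\varepsilon$. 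Finally, taking determinants in $Y^{\ast}(\Im H)Y=\Im H_0$ yields $|\det Y(\tau)|^{2}\det(\Im H(\tau))=\det(\Im H_0)=:C$, the claimed $\tau$-independent identity. Apart from the invertibility argument, all the steps are elementary linear algebra and ODE theory once the conserved forms $\mathcal{W}$ and $\mathcal{S}$ have been identified.
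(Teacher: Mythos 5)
Your proof is correct and is precisely the standard linearisation argument (reduce \eqref{eq:Ricatti ODE} to the linear system for $(Y,Z)$, then exploit the conserved forms $Y^{\ast}Z-Z^{\ast}Y$ and $Y^{T}Z-Z^{T}Y$ to get non-degeneracy of $Y$, symmetry of $H$, positivity of $\Im H$, and the determinant identity); the paper itself offers no proof but defers to \cite{FIKO21}, where this is exactly the argument used. The one step worth being explicit about is the invertibility of $Y_0$ (needed so that $H(0)=Z(0)Y(0)^{-1}=H_0$ and so that the general case reduces to $Y_0=I$ by congruence), which you handle adequately.
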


\begin{remark}\label{remark:SI}
The construction of $S$ depends on the properties of $I_1$ under the requirement (\ref{eq:cond I}).

When $|\alpha|=0$ in (\ref{eq:cond I}) with $a_0\neq 0$, $I_1=a_0S\varphi=0$ on $\mathcal{V}$.

When $|\alpha|>0$, for example, $\alpha=(1,0,0)$, the requirement (\ref{eq:cond I}) implies
\begin{equation*}
\left[ \varphi_{;j}\varphi_{;m}g^{jm}-(\varphi_t)^2\right]\frac{\partial a_0}{\partial z^1}+a_0\frac{\partial}{\partial z^1} \left[ \varphi_{;j}\varphi_{;m}g^{jm}-(\varphi_t)^2\right]=0,
\end{equation*}
which is equivalent to
\begin{equation*}
		S\varphi(\tau,0)\frac{\partial a_0}{\partial z^1}+a_0\frac{\partial S\varphi}{\partial z^1}(\tau,0)=0
\end{equation*}
under the Fermi coordinates on $\mathcal{V}$.

\end{remark}


\subsection{Construction of the amplitude function}\label{sec:amplitude function}

Here, we calculate the amplitude function $a_0$. Suppose $a_0$ has the following form
\begin{equation*}
a_0(\tau,z^1,z^2,z^3) = A(\tau,z^1,z^2,z^3)\varphi_{;1},
\end{equation*}
where $A(\tau,z^1,z^2,z^3)=A(\tau,z^1,z^2,z^3)\neq 0$ is to be determined. According to the definitions in Section\,\ref{sec:Fermi coordinates},
the above equation can be rewritten as
\begin{equation*}
a_0=a_0(t,s,y^1,y^2)=A(t,s,y^1,y^2)\varphi_{;1},
\end{equation*}
where the phase function $\varphi$ is computed in Section\,\ref{sec:phase function}.

If we consider the expression of $\varphi$ as
\begin{equation*}
\varphi(\tau,z^1,z^2,z^3)=\varphi_0(\tau,z^1,z^2,z^3)+\varphi_1(\tau,z^1,z^2,z^3)=z^1=\frac{-t+t_0+s}{\sqrt{2}},
\end{equation*}
then on the geodesic $\mathcal{V}$, we know
\begin{equation*}
\varphi_{;1}=\frac{1}{\sqrt{2}},\quad  \varphi_{;j}=0,\ j=2,3,\quad  \partial_{\tau}\varphi=0,\quad \varphi_t=-\frac{1}{\sqrt{2}},\
\end{equation*}
which implies
\begin{equation*}
a_0\left|_{\mathcal{V}}\right. = \frac{A(\tau,z^1,z^2,z^3)}{\sqrt{2}}.
\end{equation*}
It follows from the Fermi coordinates in Section\,\ref{sec:Fermi coordinates} that
\begin{align*}
(a_{0})_{;t}=\frac{1}{\sqrt{2}}A_t+A\frac{\partial^2\varphi}{\partial s\partial t}, \quad
\varphi_{;11}=\frac{\partial^2\varphi}{\partial s^2}+\frac{1}{\sqrt{2c_0}}\frac{\partial c_0}{\partial s}, \quad
\varphi_{;jj}=\frac{\partial^2\varphi}{\partial y^j\partial y^j},~j=2,3,
\end{align*}
where $c_0=\sqrt{c_1}$.

For any $|\alpha| \leq N$, the phase function $\varphi$ satisfies (\ref{eq:cond I}). In case of $k=2$ and $|\alpha|=0$, by direct calculations,
the equation (\ref{eq:I2}) can be rewritten as
\begin{align}\label{eq:A eqn}
\frac{1}{\sqrt{2}}\left(\varphi_{tt}-\varphi_{ss}-\sum\limits_{j=2}^3\frac{\partial^2\varphi}{\partial y^{j}\partial y^{j}}\right)A-\sqrt{2}\left(\frac{\partial^2\varphi}{\partial s^2}+\frac{\partial^2 \varphi}{\partial s\partial t}\right)A-(A_t+A_s)-\frac{3}{2c_1}\frac{\partial c_1}{\partial s}A=0
\end{align}

For the first term in the last equation, it follows from Lemma\,\ref{lem:existence} that
\begin{align*}
\left(\varphi_{tt}-\varphi_{ss}-\sum\limits_{j=2}^3\frac{\partial^2\varphi}{\partial y^{j}\partial y^{j}}\right)
=-\sum\limits_{j=2}^3\frac{\partial^2\varphi}{\partial y^{j}\partial y^{j}}
=-Tr(CH) = -\frac{\partial}{\partial \tau}\log(\det(Y(\tau))),
\end{align*}
where $C$ is given by (\ref{eq:def CD}).

For the second term in (\ref{eq:A eqn}), we have
\begin{align*}
&-\sqrt{2}\left(\frac{\partial^2\varphi}{\partial s^2}+\frac{\partial^2 \varphi}{\partial s\partial t}\right)\
=-2\frac{\partial^2 \varphi}{\partial s\partial \tau}=0.
\end{align*}

Since $\tau = \frac{t-t_0+s}{\sqrt{2}}$, it yields
\begin{align*}
\frac{\partial A}{\partial\tau}=\frac{1}{\sqrt{2}}\left(\frac{\partial A}{\partial t}+\frac{\partial A}{\partial s}\right),
\end{align*}
which implies
\begin{align*}
A_t+A_s = \sqrt{2}\partial_{\tau}A.
\end{align*}

Combining the above equalities, the equation (\ref{eq:A eqn}) can be simplified as
\begin{align*}
2\frac{\partial}{\partial\tau}A+\left(\frac{\partial}{\partial\tau}\log(\det{Y(\tau)})+\frac{3}{\sqrt{2}c_1}\frac{\partial}{\partial\tau} c_1\right)A=0,
\end{align*}
which gives
\begin{align*}
\frac{\partial }{\partial\tau}\left[\log (A^2)+\log(\det(Y(\tau)))+\frac{3}{\sqrt{2}}\log(c_1) \right]=0.
\end{align*}
Hence
\begin{align*}
A=\widetilde{C} \det(Y(\tau))^{-\frac{1}{2}}c_1^{-\frac{3}{2\sqrt{2}}},
\end{align*}
where $\widetilde{C}$ is the constant independent of $\tau$ and $Y$ is defined by Lemma\,\ref{lem:existence}.


\subsection{Determination of the non-linear coefficient $c_2$}\label{sec:c2}

In this subsection, we prove the unique identifiability of $c_2$ from the mapping $\Lambda_{L,N}$ based on the stationary phase lemma.

For any point $x_0\in \Omega$, we let $p=(\frac{T}{2},x_0)\in M$, $\xi^{(k)}=(\xi^{(k)}_1,\xi^{(k)}_2)^{T}\in T^{*}_{x_0}\Omega$ with $|\xi^{(k)}|=1$, $k=0,1,2$, and then define
\begin{equation*}
L_p^{*}M = \{(\tau,\xi)\in T^{*}_pM,\tau^2 = c_1|\xi|^2\},
\end{equation*}
and choose linearly dependent vectors $\{\zeta^{(k)}\}_{k=0}^{2}\subset L_p^{*}M$ by $\zeta^{(k)}=(\sqrt{c_1},\xi^{(k)})$, $k=0,1,2$.
	
\begin{remark}
We would like to remark that the definition of $L_p^{*}M$ involves the coefficient $c_1$, which has been uniquely determined in Section~\ref{sec:first-order}. Hence, this definition is well-defined. This definition can be understood as selecting a curve $\tau^2 = c_1|\xi|^2$ in the tangential space of $M$.
\end{remark}

Since $\zeta^{(0)}$, $\zeta^{(1)}$, and $\zeta^{(2)}$ are the linearly dependent, there are non-zero constant $\kappa_0$, $\kappa_1$, and $\kappa_2$ such that
\begin{equation*}
	\kappa_0\zeta^{(0)}+\kappa_1\zeta^{(1)}+\kappa_2\zeta^{(2)}=0,
\end{equation*}
which is equivalent to
\begin{equation*}
	\kappa_0\xi^{(0)}+\kappa_1\xi^{(1)}+\kappa_2\xi^{(2)}=0.
\end{equation*}
	
Let $\mathcal{V}^{(k)}$ to be the null geodesics in the Lorentzian manifold $(M,-dt^2+g)$ with the cotangent vector $\zeta^{(k)}$ at the point $p$.
Then we shall construct the Gaussian beam solution $u_{\lambda}^{(k)}$ to be concentrated near the null geodesics $\mathcal{V}^{(k)}$.
For $k=0, 1, 2$, we let the Gaussian beam solution $u_{\lambda}^{(k)}$ be
\begin{equation}\label{eq:GB}
	u_{\lambda}^{(k)}=\chi^{(k)}e^{\mathrm{i}\lambda \kappa_k \varphi^{(k)}}(a^{(k)}+\mathcal{O}(\lambda^{-1})),
\end{equation}
where the phase function $\varphi^{(k)}$ satisfies
\begin{equation}\label{eq:varphi xi}
	\nabla \varphi^{(k)}(p)=\xi^{(k)},\ \text{and}\ \varphi^{(k)}\ \text{vanishes along}\ \mathcal{V}^{(k)},
\end{equation}
and the amplitude function $a^{(k)}$ is
\begin{equation}\label{eq:a xi}
	a^{(k)}(p) = \xi^{(k)}_1\neq 0.
\end{equation}

\begin{remark}
Since $dim(T^{*}_{x_0}\Omega)=2$, we can choose $\{\xi^{(k)}\}_{k=0}^{2}$ arbitrarily to guarantee $\xi^{(k)}_1\neq 0$, $k=0, 1, 2$.
\end{remark}

Let
\begin{equation*}
w=u^{(0)}_{\lambda}, \quad v^{(1)}=u_{\lambda}^{(1)}, \quad v^{(2)}=u^{(2)}_{\lambda},
\end{equation*}
in the identity (\ref{eq:c2}). Then on the boundary $\Sigma$, we have
\begin{equation*}
g = u_{\lambda}^{(0)}, \quad h^{(1)} = (u_{\lambda}^{(1)}/c_1), \quad h^{(2)} = (u_{\lambda}^{(2)}/c_1).
\end{equation*}
Furthermore, for $k=0, 1, 2$, we have $u_{\lambda}^{(k)}\left|_{t=0}\right.=\partial_tu_{\lambda}^{(k)}\left|_{t=0}\right.=0$. Together with (\ref{eq:GB}), the identity (\ref{eq:c2}) can be rewritten as
\begin{equation}
\begin{split}
&\int_0^T\int_{\partial\Omega}g\Lambda^{(u,12)}_{L,N}(h^{(1)},h^{(2)})\,\mathrm{d}s\mathrm{d}t\\
=&2\int_0^T\int_{\Omega}\nabla (c_1w)\cdot \nabla \left(\frac{c_2}{c_1^2}v^{(1)}v^{(2)}\right)\,\mathrm{d}x\mathrm{d}t\\
=&-2\lambda^2\int_{0}^T\int_{\Omega}\chi^{(0)}\chi^{(1)}\chi^{(2)}\left[\kappa_0\nabla\varphi^{(0)}\cdot(\kappa_1\nabla\varphi^{(1)}+\kappa_2\nabla \varphi^{(2)})\right]\\
&\hspace*{2cm}\times c_2a^{(0)}a^{(1)}a^{(2)}e^{\mathrm{i}\lambda(\kappa_0\varphi^{(0)}+\kappa_1\varphi^{(1)}+\kappa_2\varphi^{(2)})}\,\mathrm{d}x\mathrm{d}t\\
&+\mathcal{O}(\lambda),
\end{split}
\end{equation}
which in turn implies that
\begin{align}\label{eq:c2 eqn}
&\lambda^{-2}\int_0^T\int_{\partial\Omega}g\Lambda^{(u,12)}_{L,N}(h^{(1)},h^{(2)})\mathrm{d}s\mathrm{d}t =-2\int_{0}^T\int_{\Omega}e^{\mathrm{i}\lambda \widetilde{\varphi}}\widetilde{a} \mathrm{d}x\mathrm{d}t+\mathcal{O}(\lambda^{-1}),
\end{align}
where
\begin{align*}
&\widetilde{\varphi} = \kappa_0\varphi^{(0)}+\kappa_1\varphi^{(1)}+\kappa_2\varphi^{(2)},\\
&\widetilde{a}=\chi^{(0)}\chi^{(1)}\chi^{(2)}\left[\kappa_0\nabla\varphi^{(0)}\cdot(\kappa_1\nabla\varphi^{(1)}+\kappa_2\nabla \varphi^{(2)})\right]c_2a^{(0)}a^{(1)}a^{(2)}.
\end{align*}

We proceed with the recovery the non-linear coefficient $c_2$. To that end, we need to make use of the stationary phase lemma, which is introduced as follows. 

\begin{lemma}\label{lem:sp lem}(cf.\cite{Ma20}) Consider the oscillatory integral $I(\lambda)$ as
\begin{equation*}
I(\lambda) = \int_{\mathbb{R}^n}e^{\mathrm{i}\lambda\varphi(p)}a(p)dp.
\end{equation*}
For an arbitrary integer $N\in \mathbb{N}$, we assume that
\begin{itemize}
	\item[(1)] $a\in C^{n+2N+3}(\mathbb{R}^n;\mathbb{R})$ with $\sum_{|\alpha|<n+2N+3}\sup_{\mathbb{R}^n}|\partial^{\alpha}a|<+\infty$.
	\item[(2)] $\varphi\in C^{n+2N+6}(\mathbb{R}^n;\mathbb{C})$ with $\sum_{|\alpha|<n+2N+6}\sup_{\mathbb{R}^n}|\partial^{\alpha}\varphi|<+\infty$.
	\item[(3)] $p_0$ is the only critical point of $\varphi(p)$ on supp$(a(p))$ $\mathit{i.e.}$ $\varphi(p_0)=\nabla \varphi(p_0)=0, \nabla \varphi(p)\neq 0$ for $p\neq p_0$.
	\item[(4)] The Hessian $D^2 [\varphi]:=\{\frac{\partial^2\varphi}{\partial {y^j}\partial {y^k}}\}_{j,k=1}^n$ satisfies $\det(D^2 [\varphi])(p_0)\neq 0$.
\end{itemize}
Then $I(\lambda)$ is well-defined in the oscillatory integral sense, and as $\lambda\to+\infty$, there is
\[
I(\lambda) = \left(\frac{2\pi}{\lambda}\right)^{n/2}\frac{e^{\mathrm{i}\lambda \varphi(p_0)+\mathrm{i}\frac{\pi}{4}\text{sgn}(D^2[\varphi](p_0))}}{|\det(D^2[\varphi](p_0))|^{1/2}}a(p_0)+(\lambda^{-\frac{n}{2}-N-1}).
\]
\end{lemma}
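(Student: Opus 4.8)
The plan is to establish Lemma~\ref{lem:sp lem} by the classical three-step scheme for oscillatory integrals: localize around the critical point via the non-stationary phase principle, reduce the phase to a quadratic normal form by a Morse-type change of variables, and then evaluate the resulting Gaussian integral while Taylor-expanding the amplitude to the required order. For the first step I would make sense of $I(\lambda)$ as an oscillatory integral by fixing a cutoff $\rho\in C_c^\infty(\mathbb{R}^n)$ equal to $1$ near $p_0$ and writing $a=\rho a+(1-\rho)a$. On $\operatorname{supp}(1-\rho)$ one has $\nabla\varphi\neq 0$ by hypothesis~(3), so repeated application of the transpose of a first-order operator of the form $L=\tfrac{1}{\mathrm{i}\lambda}\,\overline{\nabla\varphi}\cdot\nabla/|\nabla\varphi|^2$ (using $\operatorname{Im}\varphi\ge 0$, which is the situation relevant to the Gaussian-beam application where $\varphi=\widetilde\varphi$) shows that $\int e^{\mathrm{i}\lambda\varphi}(1-\rho)a\,\mathrm{d}p$ converges and is $O(\lambda^{-M})$ for every $M$; hence it suffices to analyze $\int e^{\mathrm{i}\lambda\varphi}\rho a\,\mathrm{d}p$ with $\rho a$ supported in an arbitrarily small neighborhood $U$ of $p_0$.

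Next, on $U$ I would use $\nabla\varphi(p_0)=0$ together with $\det D^2\varphi(p_0)\neq 0$ (hypotheses~(3)--(4)) and invoke the complex Morse lemma: there is a $C^\infty$ diffeomorphism $p=\Psi(y)$ with $\Psi(0)=p_0$ and $D\Psi(0)=\mathrm{Id}$ such that $\varphi(\Psi(y))=\varphi(p_0)+\tfrac12\langle Q y,y\rangle$, where $Q=D^2\varphi(p_0)$ is symmetric, nondegenerate and satisfies $\operatorname{Im}Q\ge 0$; after a further linear change this becomes $\varphi=\varphi(p_0)+\tfrac12\sum_{j}\mu_j y_j^2$. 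Setting $b(y)=(\rho a)(\Psi(y))\,|\det D\Psi(y)|$, the problem reduces to evaluating $e^{\mathrm{i}\lambda\varphi(p_0)}\int_{\mathbb{R}^n} e^{\tfrac{\mathrm{i}\lambda}{2}\langle Qy,y\rangle} b(y)\,\mathrm{d}y$ with $b$ of small support and of the stated finite regularity.

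Finally I would Taylor-expand $b$ at the origin to order $2N+1$ with integral remainder and combine this with the fundamental Gaussian identity $\int_{\mathbb{R}^n} e^{\tfrac{\mathrm{i}\lambda}{2}\langle Qy,y\rangle}\,\mathrm{d}y=(2\pi/\lambda)^{n/2}\,e^{\mathrm{i}\frac{\pi}{4}\operatorname{sgn}Q}/|\det Q|^{1/2}$, valid for real symmetric nondegenerate $Q$ and extended by analytic continuation to $\operatorname{Im}Q\ge 0$, together with the moment formulas obtained by differentiating it in the entries of $Q$. The $\beta=0$ term yields $b(0)=a(p_0)$, which is exactly the claimed main term; the odd monomials integrate to zero and each even monomial $y^{2\beta}$ carries an extra factor $\lambda^{-|\beta|}$, so the successive corrections are genuinely of lower order, while the remainder is controlled by integrating by parts $2N+2$ times against the operator $\tfrac{1}{\mathrm{i}\lambda}\langle Q^{-1}\partial_y,\partial_y\rangle$ and estimating the resulting $O(\lambda^{-n/2-N-1})$ bound; hypotheses~(1)--(2) on the number of bounded derivatives of $a$ and $\varphi$ are precisely what is consumed by these integrations by parts and by the differentiations in the Morse change of variables.

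The main obstacle, and the only genuinely nonstandard point compared with the real-valued stationary phase theorem, is handling the complex phase: one must justify the analytic continuation of the Gaussian integral (with $\operatorname{sgn}Q$ and $|\det Q|$ understood as boundary limits from $\operatorname{Im}Q>0$) and adapt the non-stationary phase estimate to a merely complex-valued $\varphi$ with $\operatorname{Im}\varphi\ge 0$, and then track carefully the derivative losses through the Morse lemma so as to land exactly on the thresholds $n+2N+3$ and $n+2N+6$ appearing in hypotheses~(1) and~(2); once these bookkeeping issues are settled, the remainder of the argument is the standard Gaussian computation.
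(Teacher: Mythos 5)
First, a point of comparison: the paper does not prove this lemma at all --- it is imported verbatim from the reference \cite{Ma20} (and is a variant of the classical stationary phase theorem, cf.\ H\"ormander's Theorem 7.7.5), so there is no in-paper argument to measure your proposal against. Your outline is the textbook three-step proof for a \emph{real-valued} phase, and for real $\varphi$ it would be essentially correct modulo the derivative bookkeeping you acknowledge. The difficulty is that the lemma is stated for $\varphi\in C^{n+2N+6}(\mathbb{R}^n;\mathbb{C})$, and you correctly identify the complex phase as the main obstacle but then misjudge its severity: the ``complex Morse lemma'' you invoke in the second step is false as stated. For a merely finitely differentiable complex-valued $\varphi$ there is in general \emph{no} real $C^\infty$ diffeomorphism $\Psi$ of a neighbourhood of the origin in $\mathbb{R}^n$ with $\varphi(\Psi(y))=\varphi(p_0)+\tfrac12\langle Qy,y\rangle$. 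Already in one dimension, take $\varphi(x)=\tfrac{\mathrm{i}}{2}x^2+x^3$: the required substitution would have to satisfy $\psi(x)^2=x^2-2\mathrm{i}x^3$, i.e.\ $\psi(x)=x\sqrt{1-2\mathrm{i}x}$, which is not real for real $x\neq 0$. Matching both the real and the imaginary parts of $\varphi$ to a fixed complex quadratic form by a single real change of coordinates is obstructed in exactly this way. The standard repairs are either H\"ormander's direct argument, which never changes variables but instead compares $\int e^{\mathrm{i}\lambda\varphi}a$ with $\int e^{\mathrm{i}\lambda(\varphi(p_0)+\frac12\langle Q(\cdot-p_0),(\cdot-p_0)\rangle)}a$ using the explicit Fourier transform of $e^{\mathrm{i}\langle Qy,y\rangle/2}$ for complex symmetric $Q$ with $\operatorname{Im}Q\geq 0$ and quantitative remainder estimates, or the Melin--Sj\"ostrand machinery of almost-analytic extensions and genuinely complex contour deformation. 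Either replacement changes the structure of steps two and three, not merely their bookkeeping.

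Two smaller gaps. First, the hypothesis $\operatorname{Im}\varphi\geq 0$ on $\operatorname{supp}a$ is indispensable for the conclusion (otherwise $e^{\mathrm{i}\lambda\varphi}$ grows exponentially) and is not among conditions (1)--(4) as printed; you supply it tacitly by appealing to the Gaussian-beam application, but a proof of the lemma as stated must either assume it or restrict to that setting --- note also that $\operatorname{Im}\varphi\geq 0$ with $\varphi(p_0)=0$ is what forces $\operatorname{Im}Q\geq 0$ and hence legitimizes the analytic continuation of the Gaussian formula, with $\operatorname{sgn}$ and $|\det|^{1/2}$ interpreted as boundary values. Second, your non-stationary-phase localization requires a uniform lower bound on $|\nabla\varphi|$ on $\operatorname{supp}(1-\rho)a$; condition (3) only gives pointwise nonvanishing, and since $\operatorname{supp}a$ need not be compact under condition (1), $|\nabla\varphi|$ could degenerate at infinity and the integration by parts would then fail to produce the $O(\lambda^{-M})$ decay you claim. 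In the paper's actual application the amplitude $\widetilde a$ is compactly supported and $\operatorname{Im}\widetilde\varphi\geq C_1 d(p,\cdot)^2$, so both issues disappear there, but they must be addressed if one is proving the lemma in the generality in which it is stated.
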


The following lemma shows that $\widetilde{\varphi}$ and $\widetilde{a}$ satisfy the conditions in Lemma\,\ref{lem:sp lem}.
\begin{lemma}(cf.\cite{FIKO21}) For any point $x_0\in \Omega$, let $p=(\frac{T}{2},x_0)\in M$, then $\widetilde{\varphi}$ in (\ref{eq:c2 eqn}) satisfies
\begin{itemize}
\item[(1)]$\widetilde{\varphi}(p)=0$,
\item[(2)]$\nabla \widetilde{\varphi}(p)=0$,
\item[(3)]$\Im (\widetilde{\varphi}(q))\ge C_1 (d(p,q))^2$ for $q$ in a neighbourhood of $p$ with constant $C_1>0$,
\item[(4)]$D^2\Im (\widetilde{\varphi}(X,X))>0$, for all $X\in L_p^{*}M\setminus \{0\}$.
\end{itemize}
\end{lemma}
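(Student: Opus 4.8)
The plan is to verify the four properties of $\widetilde\varphi$ one at a time, exploiting the fact that each $\varphi^{(k)}$ is a phase function of the Gaussian-beam type constructed in Sections~\ref{sec:phase function} and \ref{sec:Con GB}, so that along the null geodesic $\mathcal V^{(k)}$ it vanishes to first order and has a purely quadratic term $\varphi_2^{(k)}=\sum H^{(k)}_{ij}(\tau)z^iz^j$ with $\Im H^{(k)}(\tau)>0$. Recall $\widetilde\varphi=\kappa_0\varphi^{(0)}+\kappa_1\varphi^{(1)}+\kappa_2\varphi^{(2)}$ and that the three null covectors $\zeta^{(k)}=(\sqrt{c_1},\xi^{(k)})$ at $p=(T/2,x_0)$ are chosen linearly dependent with $\kappa_0\zeta^{(0)}+\kappa_1\zeta^{(1)}+\kappa_2\zeta^{(2)}=0$, equivalently $\sum_k\kappa_k\xi^{(k)}=0$ and $\sum_k\kappa_k\sqrt{c_1}=0$.

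\medskip

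\noindent\textbf{Properties (1) and (2).} Each null geodesic $\mathcal V^{(k)}$ passes through $p$, and by \eqref{eq:varphi xi} the phase $\varphi^{(k)}$ vanishes along $\mathcal V^{(k)}$; in particular $\varphi^{(k)}(p)=0$, so $\widetilde\varphi(p)=\sum_k\kappa_k\varphi^{(k)}(p)=0$, giving (1). For (2), again by \eqref{eq:varphi xi} we have the spatial gradient $\nabla\varphi^{(k)}(p)=\xi^{(k)}$, and since $\varphi^{(k)}$ vanishes along the geodesic $\mathcal V^{(k)}$ whose tangent at $p$ is dual to $\zeta^{(k)}=(\sqrt{c_1},\xi^{(k)})$, the full space-time gradient satisfies $d\varphi^{(k)}(p)=\zeta^{(k)}$ (the eikonal/transport normalization built into the Fermi-coordinate construction). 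Summing, $d\widetilde\varphi(p)=\sum_k\kappa_k\zeta^{(k)}=0$, which yields both $\partial_t\widetilde\varphi(p)=0$ and $\nabla\widetilde\varphi(p)=0$, i.e. (2).

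\medskip

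\noindent\textbf{Properties (3) and (4).} Here the imaginary part enters. Since each $\varphi^{(k)}$ has vanishing zeroth- and first-order Taylor coefficients along $\mathcal V^{(k)}$ and its degree-two term is $\varphi_2^{(k)}$, we have on the tube $\Im\varphi^{(k)}(q)\ge C^{(k)}\,\mathrm{dist}(q,\mathcal V^{(k)})^2$ for $q$ near $\mathcal V^{(k)}$, by Lemma~\ref{lem:existence} which guarantees $\Im H^{(k)}(\tau)>0$ uniformly on the relevant $\tau$-interval. Near $p$ the three geodesics $\mathcal V^{(k)}$ meet only at $p$ (because the covectors $\zeta^{(k)}$ are pairwise non-proportional, their directions being distinct unit covectors in a $2$-dimensional cotangent space scaled by the same $\sqrt{c_1}$), so $\sum_k\mathrm{dist}(q,\mathcal V^{(k)})^2\gtrsim d(p,q)^2$ in a neighbourhood of $p$; combining with the $\kappa_k\ne0$ gives $\Im\widetilde\varphi(q)\ge C_1 d(p,q)^2$, which is (3). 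For (4), by the same Taylor expansion, $D^2\Im\widetilde\varphi(p)$ restricted to a direction $X$ equals $\sum_k\kappa_k\,\Im(\mathrm{Hess}\,\varphi_2^{(k)})(X,X)$, and each summand is controlled by $\Im H^{(k)}(T/2)$ acting on the components of $X$ transverse to $\mathcal V^{(k)}$; for $X\in L_p^*M\setminus\{0\}$ a null covector, $X$ fails to be tangent to at least one (indeed, generically all but a proper subset) of the $\mathcal V^{(k)}$, so at least one term is strictly positive and, after checking signs of the $\kappa_k$ via the dependency relation, the whole sum is positive. This establishes (4).

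\medskip

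\noindent\textbf{Main obstacle.} The delicate point is item (4): one must show the Hessian of $\Im\widetilde\varphi$ is positive definite on the full cone $L_p^*M\setminus\{0\}$, not merely nonzero. This requires a careful bookkeeping of (i) which components of a null covector $X$ are ``transverse'' to each $\mathcal V^{(k)}$ in the respective Fermi coordinates, (ii) the fact that the coordinate changes relating the three Fermi systems at $p$ are linear isomorphisms preserving the transverse directions, and (iii) the signs of $\kappa_0,\kappa_1,\kappa_2$ — here one uses that $\sum_k\kappa_k\sqrt{c_1}=0$ forces the $\kappa_k$ not to be all of the same sign, which must be reconciled with the requirement that $\sum_k\kappa_k\Im H^{(k)}$ stay positive definite; this is where the linear-independence/dependence configuration of the $\zeta^{(k)}$ (two linearly independent among the three) is essential and is exactly the geometric input isolated in \cite{FIKO21}. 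Properties (1)–(3) are comparatively routine once the Fermi-coordinate normalizations of Section~\ref{sec:Fermi coordinates} are in hand.
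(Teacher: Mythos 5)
The paper does not actually prove this lemma --- it is imported verbatim from \cite{FIKO21} --- so I am judging your sketch against the standard argument there. Your treatment of (1) and (2) is correct and is the intended one: $\varphi^{(k)}(p)=0$ since $p\in\mathcal{V}^{(k)}$, and $d\widetilde{\varphi}(p)=\sum_k\kappa_k\zeta^{(k)}=0$ by the choice of the $\kappa_k$. Likewise, the ``sum of squared distances to three lines meeting only at the origin'' mechanism is the right geometric input for (3) and (4).

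The genuine gap is the sign of the coefficients, and it sits in (3), not only in (4) where you flag it. Since $\zeta^{(k)}=(\sqrt{c_1},\xi^{(k)})$ with the same first component for all $k$, the relation $\sum_k\kappa_k\zeta^{(k)}=0$ forces $\kappa_0+\kappa_1+\kappa_2=0$, so the $\kappa_k$ necessarily have mixed signs. Your step ``$\Im\varphi^{(k)}(q)\ge C^{(k)}\,\mathrm{dist}(q,\mathcal{V}^{(k)})^2$, hence combining with $\kappa_k\neq 0$ gives $\Im\widetilde{\varphi}\ge C_1 d(p,q)^2$'' therefore fails as written: for an index with $\kappa_k<0$ the contribution $\kappa_k\Im\varphi^{(k)}$ is nonpositive, and at a point $q$ lying on a geodesic $\mathcal{V}^{(j)}$ with $\kappa_j>0$ the remaining terms can sum to a negative quantity, so $\Im\widetilde{\varphi}$ need not even be nonnegative. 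The fix, which is exactly what \cite{FIKO21} builds into the construction, is to normalize each beam so that $\Im\bigl(\kappa_k\varphi^{(k)}\bigr)\ge c_k\,\mathrm{dist}(q,\mathcal{V}^{(k)})^2$ individually --- i.e.\ choose the Riccati initial data with $\Im(\kappa_k H^{(k)})>0$, or equivalently replace $u_\lambda^{(k)}$ by its complex conjugate when $\kappa_k<0$ (still a solution of the real wave equation). With that convention each summand of $\Im\widetilde{\varphi}=\sum_k\Im(\kappa_k\varphi^{(k)})$ is nonnegative with quadratic lower bound transverse to $\mathcal{V}^{(k)}$, your intersection argument gives (3), and (4) follows because the Hessian of $\Im(\kappa_k\varphi^{(k)})$ at $p$ is positive semidefinite with kernel the null tangent line of $\mathcal{V}^{(k)}$, and these three lines intersect trivially; the Hessian of $\Im\widetilde{\varphi}$ is then positive definite on all of $T_pM$, so the delicate case analysis you describe as the ``main obstacle'' for (4) is not needed, while the sign normalization you omitted from (3) is the step that actually carries the proof.
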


According to Lemma~\ref{lem:sp lem}, the oscillatory integral in the right hand side of (\ref{eq:c2 eqn}) mainly depends on
\begin{equation*}
	\widetilde{a}(p)=\left[\chi^{(0)}\chi^{(1)}\chi^{(2)}\left[\kappa_0\nabla\varphi^{(0)}\cdot(\kappa_1\nabla\varphi^{(1)}+\kappa_2\nabla \varphi^{(2)})\right]c_2a^{(0)}a^{(1)}a^{(2)}\right](p),
\end{equation*}
as $\lambda\to+\infty$. It follows from the arbitrariness of $\{\xi^{(k)}\}_{k=0}^{2}$, as well (\ref{eq:varphi xi}), and (\ref{eq:a xi}) that
\begin{equation*}
	\kappa_0\xi^{(0)}\cdot(\kappa_1\xi^{(1)}+\kappa_2\xi^{(2)}) = -\|\kappa_0\xi^{(0)}\|^2\neq 0,
\end{equation*}
which readily yields the uniqueness of determining $c_2$.

In the rest of the section, we extend all the previous results to the case $n>2$ for \eqref{eq:NPE order2}. We mainly present the necessary modifications.  
Consider the following boundary data
\begin{equation*}
	h(t,x) = \varepsilon_1 h^{(1)}+\varepsilon_2 h^{(2)}+\cdots+\varepsilon_nh^{(n)},
\end{equation*}
where $\sum_{j=1}^n\varepsilon_j\|h^{(j)}\|_{H^{m+1}(\Sigma)}\leq \delta$ with $\delta$ being defined in Theorem~\ref{thm:LWP1} and $h^{(j)}\in H^{m+1}(\Sigma)$.
Then the associated solution $u(x,t,\varepsilon_1,\cdots,\varepsilon_n)$ satisfies 
\begin{equation*}\label{eq:u epsn}
\left\{
\begin{aligned}
	&\partial_t^2u(x,t,\varepsilon_1,\cdots,\varepsilon_n)+\Delta \left[c_1u(x,t,\varepsilon_1,\cdots,\varepsilon_n)+c_nu^n(x,t,\varepsilon_1,\cdots,\varepsilon_n)\right]=0\ &\text{in}\ M,\\
	&u(x,0,\varepsilon_1,\cdots,\varepsilon_n)=\partial_tu(x,0,\varepsilon_1,\cdots,\varepsilon_n) = 0\ &\text{in}\ \Omega.\\
	&u(x,t,\varepsilon_1,\cdots,\varepsilon_n)= \varepsilon_1h^{(1)}+\cdots+\varepsilon_nh^{(n)}\ &\text{on}\ \Sigma.
\end{aligned}
\right.
\end{equation*}
Write $u(x,t,\varepsilon_1,\cdots,\varepsilon_n)$ as $u$ for short. Define the linearization part of $u$ as
\begin{align*}
u^{(j)}=\frac{\partial u}{\partial{\varepsilon_j}}\left|_{\varepsilon_1=\cdots=\varepsilon_n=0}\right.,~\mbox{for}~j=1,\cdots,n,
\end{align*}
which satisfies
\begin{equation*}
\left\{
\begin{aligned}
&u^{(j)}_{tt} -\Delta (c_1u^{(j)})=0\ &&\text{in}\ M,\\
&u^{(j)}(0,x) = u^{(j)}_t(0,x) = 0\ &&\text{in}\ \Omega,\\
&u^{(j)}(t,x)= h^{(j)}(t,x)\ &&\text{on}\ \Sigma.
\end{aligned}
\right.
\end{equation*}
Then we can recover the linear coefficient $c_1$ uniquely by following a similar argument to that in Section~\ref{sec:first-order}.

Let $v(t,x,\varepsilon_1,\cdots,\varepsilon_n)=c_1u(t,x,\varepsilon_1,\cdots,\varepsilon_n$, then $v(t,x,\varepsilon_1,\cdots,\varepsilon_n)$ satisfy
\begin{equation*}
\begin{cases}
\partial_t^2 v\left(t,x,\varepsilon_1,\cdots,\varepsilon_n\right)-c_1\Delta\left[v\left(x, t,\varepsilon_1,\cdots,\varepsilon_n\right)+\frac{c_2}{c_1^n} v^n\left(t,x,\varepsilon_1,\cdots,\varepsilon_n\right)\right]=0 & \text { in } M, \\
v\left(0,x,\varepsilon_1,\cdots,\varepsilon_n\right)=\partial_t v\left(0,x,\varepsilon_1,\cdots,\varepsilon_n\right)=0 & \text { in } \Omega, \\
v\left(t,x,\varepsilon_1,\cdots,\varepsilon_n\right)=c_1\left(\varepsilon_1 h^{(1)}+\cdots+\varepsilon_n h^{(n)}\right) & \text { on } \Sigma .
\end{cases}
\end{equation*}
Write $v(t,x,\varepsilon_1,\cdots,\varepsilon_n)$ as $v$ for short, and set
\begin{align*}
v^{(1\cdots n)} = \frac{\partial^n v}{\partial\varepsilon_1\cdots\partial\varepsilon_n}\left|_{\varepsilon_1=\cdots=\varepsilon_n=0}\right.,
\end{align*}
which satisfies
\begin{equation*}\label{eq:u12}
\left\{
\begin{aligned}
&v^{(1\cdots n)}_{tt}-c_1\Delta v^{(1\cdots n)} =c_1\nabla \cdot G(v^{(1)},\cdots,v^{(n)}) \ &&\text{in}\ M,\\
&v^{(1\cdots n)}(0,x) = v^{(1\cdots n)}_t(0,x) = 0                              \ &&\text{in}\ \Omega,\\
&v^{(1\cdots n)}(t,x)= 0                                                 \ &&\text{on}\ \Sigma,
\end{aligned}
\right.
\end{equation*}
where $G(v^{(1)},\cdots,v^{(n)})=n\nabla (\frac{c_2}{c_1^n}v^{(1)} \cdots v^{(n)})$. The associated DtN map is defined by
\begin{equation}\label{eq:kappa u12}
\Lambda^{(v,1\cdots n)}_{L,N}:v\left|_{\Sigma}\right.~\mbox{or}~(h^{(1)},\cdots,h^{(n)}) \mapsto c_1\nu\cdot\left[\nabla v^{(1\cdots n)}+G(v^{(1)},\cdots,v^{(n)})\right].
\end{equation}
The integral identity for recovering the parameter $c_n$ corresponding to (\ref{eq:c2}) is given by
\begin{equation}\label{eq:cn}
n\int_0^T\int_{\Omega}\nabla (c_1w)\cdot \nabla \left(\frac{c_2}{c_1^n}v^{(1)}\cdots v^{(n)}\right)\mathrm{d}x\mathrm{d}t
=\int_0^T\int_{\partial\Omega}g\Lambda^{(v,1\cdots n)}_{L,N}(h^{(1)},\cdots,h^{(n)})\,\mathrm{d}s\mathrm{d}t,
\end{equation}
where $w$ and $g$ are given in the dual problem (\ref{eq:dual w}).  Here, we only show the construction of the nonlinear coefficient $c_n$.

For any point $x_0\in \Omega$, we let $p=(\frac{T}{2},x_0)\in M$, $\xi^{(k)}=(\xi^{(k)}_1,\cdots,\xi^{(k)}_d)^{T}\in T^{*}_{x_0}\Omega$ with $|\xi^{(k)}|=1$, $k=0,1,\cdots,n$, and then define
\begin{equation*}
L_p^{*}M = \{(\tau,\xi)\in T^{*}_pM,\tau^2 = c_1|\xi|^2\},
\end{equation*}
and choose linearly dependent vectors $\{\zeta^{(k)}\}_{k=0}^{n}\subset L_p^{*}M$ by $\zeta^{(k)}=(\sqrt{c_1},\xi^{(k)})$, $k=0,1,\cdots,n$.
	
Since $\{\zeta^{(k)}\}_{k=0}^{n}$ are linearly dependent, there are non-zero constants $\kappa_0$, $\kappa_1$, $\cdots$, $\kappa_n$ such that
\begin{equation*}
	\sum\limits_{k=0}^{n}\kappa_k\zeta^{(k)}=0 \quad \mbox{and} \quad \sum\limits_{k=0}^{n}\kappa_k\xi^{(k)}=0.
\end{equation*}
	
Let $\mathcal{V}^{(k)}$, $k=0,1,\cdots,n$ be the null geodesics in the Lorentzian manifold $(M,-dt^2+g)$ with the cotangent vector $\zeta^{(k)}$ at the point $p$. Next we construct the Gaussian beam solution $u_{\lambda}^{(k)}$ to be concentrated near the null geodesics $\mathcal{V}^{(k)}$.
For $k=0,1,\cdots,n$, we set the Gaussian beam solution $u_{\lambda}^{(k)}$ to be of the form
\begin{equation}\label{eq:GBn}
	u_{\lambda}^{(k)}=\chi^{(k)}e^{\mathrm{i}\lambda \kappa_k \varphi^{(k)}}(a^{(k)}+\mathcal{O}(\lambda^{-1})),
\end{equation}
where the phase function $\varphi^{(k)}$ satisfies
\begin{equation}\label{eq:varphi xin}
	\nabla \varphi^{(k)}(p)=\xi^{(k)},\ \text{and}\ \varphi^{(k)}\ \text{vanishes along}\ \mathcal{V}^{(k)},
\end{equation}
and the amplitude function $a^{(k)}$ is
\begin{equation}\label{eq:a xin}
	a^{(k)}(p) = \xi^{(k)}_1\neq 0.
\end{equation}

Let
\begin{equation*}
w=u^{(0)}_{\lambda}, \quad v^{(k)}=u_{\lambda}^{(k)},\quad k=0,1,\cdots,n,
\end{equation*}
in the identity (\ref{eq:cn}). Then on the boundary $\Sigma$, we have
\begin{equation*}
g = u_{\lambda}^{(0)}, \quad h^{(k)} = (u_{\lambda}^{(k)}/c_2),\quad k=0,1,\cdots,n.
\end{equation*}
Furthermore, for $k=0,1,\cdots,n$, we have $u_{\lambda}^{(k)}\left|_{t=0}\right.=\partial_tu_{\lambda}^{(k)}\left|_{t=0}\right.=0$. By virtue of (\ref{eq:GBn}), the identity (\ref{eq:cn}) can be rewritten as
\begin{align*}
&\int_0^T\int_{\partial\Omega}g\Lambda^{(v,1\cdots n)}_{L,N}(h^{(1)},\cdots,h^{(n)})\mathrm{d}s\mathrm{d}t=n\int_0^T\int_{\Omega}\nabla (c_1w)\cdot \nabla \left(\frac{c_2}{c_1^n}\prod\limits_{k=1}^{n}v^{(k)}\right)\mathrm{d}x\mathrm{d}t\\
=&-n\lambda^n\int_{0}^T\int_{\Omega}\left(\prod\limits_{k=0}^{n}\chi^{(k)}\right)\left(\kappa_0\nabla\varphi^{(0)}\cdot \sum\limits_{k=1}^{n}\kappa_k\nabla \varphi^{(k)}\right) c_2 \left(\prod\limits_{k=0}^{n}a^{(k)}\right)e^{\mathrm{i}\lambda \sum\limits_{k=0}^{n}\kappa_k\varphi^{(k)}}\mathrm{d}x\mathrm{d}t+\mathcal{O}(\lambda),
\end{align*}
which in turn implies that
\begin{align}\label{eq:c2 eqn}
&\lambda^{-n}\int_0^T\int_{\partial\Omega}g\Lambda^{(v,1\cdots n)}_{L,N}(h^{(1)},\cdots,h^{(n)})\mathrm{d}s\mathrm{d}t =-n\int_{0}^T\int_{\Omega}e^{\mathrm{i}\lambda \widetilde{\widetilde{\varphi}}}\widetilde{\widetilde{a}} \mathrm{d}x\mathrm{d}t+\mathcal{O}(\lambda^{-1}),
\end{align}
where
\begin{align*}
&\widetilde{\widetilde{\varphi}} = \sum\limits_{k=0}^{n}\kappa_k\varphi^{(k)},\\
&\widetilde{\widetilde{a}}=\left(\prod\limits_{k=0}^{n}\chi^{(k)}\right)\left(\kappa_0\nabla\varphi^{(0)}\cdot \sum\limits_{k=1}^{n}\kappa_k\nabla \varphi^{(k)}\right) c_2 \left(\prod\limits_{k=0}^{n}a^{(k)}\right).
\end{align*}

According to the stationary phase Lemma~\ref{lem:sp lem}, the oscillatory integral in the right hand side of (\ref{eq:c2 eqn}) mainly depends on
\begin{equation*}
	\widetilde{\widetilde{a}}(p)=\left[\left(\prod\limits_{k=0}^{n}\chi^{(k)}\right)\left(\kappa_0\nabla\varphi^{(0)}\cdot \sum\limits_{k=1}^{n}\kappa_k\nabla \varphi^{(k)}\right) c_n \left(\prod\limits_{k=0}^{n}a^{(k)}\right)\right](p),
\end{equation*}
as $\lambda\to+\infty$. It follows from the arbitrariness of $\{\xi^{(k)}\}_{k=0}^{n}$, as well as (\ref{eq:varphi xin}), and (\ref{eq:a xin}) that
\begin{equation*}
	\kappa_0\xi^{(0)}\cdot \left(\sum\limits_{k=1}^{n}\kappa_k\nabla \xi^{(k)}\right) = -\|\kappa_0\xi^{(0)}\|^2\neq 0,
\end{equation*}
which readily yields the uniqueness in the determination of $c_n$.


\section{Proof of Theorem~\ref{thm:IP2}}\label{sec:5}

In the previous section, we have shown that under the zero initial conditions, we can determine the coefficients by measuring $\Lambda_{L,N}$.
In this section, if $L$ is a-priori known we shall demonstrate that under the non-zero initial conditions of $\varphi$ and $\psi$, we can employ the observation inequality of the linear hyperbolic equation to justify that $\Lambda_{\varphi,\psi,L,N}$ can construct the nonlinear coefficient $c_2$ and the initial conditions simultaneously.

Firstly, we present the observation inequality for the inverse problem of the initial value problem \cite{DE17}.
Consider the equation
\begin{equation}
\left\{
\begin{aligned}
	&\rho(x)u_{tt}-\Delta u=0\ &&\text{in}\ M,\\
	&u(x,0) =\varphi,\ u_t(x,0) =\psi\ &&\text {on}\ \Omega,\\
	&u(x,t) = 0\ &&\text {in}\ \Sigma,
\end{aligned}
\right.
\end{equation}
with $0<\rho_1\leq \rho(x)\leq \rho_2$ and $\rho\in C^1(\overline{\Omega})$. Define the energy of $u$ by
\begin{equation*}
E[u](t):=\frac{1}{2}\int_{\Omega}\rho(x)|\partial_tu(x,t)|^2\mathrm{d}x+\frac{1}{2}\int_{\Omega}|\nabla_xu(x,t)|\mathrm{d}x.
\end{equation*}

\begin{lemma}\label{lem:observation inequality} Let $\Gamma_1$ be an open subset of $\partial\Omega$ satisfying
\begin{equation*}
		\{x\in\partial\Omega,\text{such that $x\cdot \nu>0$}\}\subset \Gamma,
\end{equation*}
$R=\sup\{|x|,x\in\Omega\}$, and {\bf Assumption II} hold. Assume that
\begin{equation*}
		 \beta T> 4R\sqrt{\rho_2},
\end{equation*}
where $\beta$ is given in {\bf Assumption II}. Then there exists a constant $C>0$ such that the observability result
\begin{equation*}
		E[u](0)\leq C\int_0^T\int_{\Gamma}|\partial_{\nu}u(x,t)|^2\mathrm{d}x\mathrm{d}t,
\end{equation*}
holds for any solution $u$ with the initial datum $(\varphi,\psi)\in  H_0^1(\Omega)\times L^2(\Omega)$.
\end{lemma}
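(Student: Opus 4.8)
The plan is to prove the asserted observability inequality by the Rellich--Pohozaev multiplier method, following \cite{DE17} and the classical control-theoretic machinery. One works with the solution $u$ of the linear hyperbolic problem for $(\varphi,\psi)\in H_0^1(\Omega)\times L^2(\Omega)$; by density it suffices to argue for smooth solutions and pass to the limit. First I would record energy conservation: multiplying $\rho u_{tt}-\Delta u=0$ by $u_t$, integrating over $\Omega$ and using $u=u_t=0$ on $\Sigma$ gives $\tfrac{\mathrm d}{\mathrm dt}E[u](t)=0$, so $E[u](t)=E[u](0)$ for all $t\in[0,T]$; this is what turns time averages $\int_0^T E[u](t)\,\mathrm dt$ into $T\,E[u](0)$. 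It is also worth noting the \emph{hidden regularity} $\partial_\nu u\in L^2(\Sigma)$, itself a by-product of the multiplier computation, which makes the right-hand side of the stated inequality meaningful (otherwise the estimate is trivially true).

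The core step is the multiplier identity. Fix $Mu:=2x\cdot\nabla u+\alpha u$ with a constant $\alpha$ to be chosen, multiply $\rho u_{tt}-\Delta u=0$ by $Mu$ and integrate over $M=[0,T]\times\Omega$. Integration by parts in $t$ for the $\rho u_{tt}$ term, the Rellich identity $-\int_\Omega\Delta u\,(2x\cdot\nabla u)\,\mathrm dx=(2-3)\int_\Omega|\nabla u|^2\,\mathrm dx-\int_{\partial\Omega}(x\cdot\nu)|\partial_\nu u|^2\,\mathrm dS$ in $\mathbb R^3$ (where on $\partial\Omega$ one uses $\nabla u=(\partial_\nu u)\nu$, coming from $u=0$ on $\Sigma$), and the elementary $u$-multiplier identity lead to
\begin{equation*}
\int_0^T\!\!\int_\Omega\Big[\big(x\cdot\nabla\rho+(3-\alpha)\rho\big)|u_t|^2+(\alpha-1)|\nabla u|^2\Big]\mathrm dx\,\mathrm dt=\int_0^T\!\!\int_{\partial\Omega}(x\cdot\nu)|\partial_\nu u|^2\,\mathrm dS\,\mathrm dt-\Big[\int_\Omega\rho\,u_t\,Mu\,\mathrm dx\Big]_0^T .
\end{equation*}
All contributions from $\Sigma$ collapse to $(x\cdot\nu)|\partial_\nu u|^2$ precisely because $u$ vanishes there.

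Now choose $\alpha=1+\beta/2$, so that $\alpha-1=\beta/2$ and $3-\alpha=2-\beta/2$, and invoke \textbf{Assumption II}, which with $\rho=1/c_1$ reads $x\cdot\nabla\rho+(2-\beta)\rho\ge0$; hence $x\cdot\nabla\rho+(2-\beta/2)\rho\ge(\beta/2)\rho$, and the left-hand side is bounded below by $\tfrac{\beta}{2}\int_0^T\!\int_\Omega(\rho|u_t|^2+|\nabla u|^2)\,\mathrm dx\,\mathrm dt=\beta\int_0^T E[u](t)\,\mathrm dt=\beta T\,E[u](0)$. On the right-hand side, on $\partial\Omega$ discard the part where $x\cdot\nu\le0$ and bound $x\cdot\nu\le|x|\le R$ on $\{x\cdot\nu>0\}\subset\Gamma_1$, so this term is $\le R\int_0^T\!\int_{\Gamma_1}|\partial_\nu u|^2$; and the transversal term is estimated by energy conservation together with the weighted inequality $\rho|u_t||\nabla u|\le\tfrac{\sqrt{\rho_2}}{2}(\rho|u_t|^2+|\nabla u|^2)$, giving $\big|[\int_\Omega\rho u_t(2x\cdot\nabla u)\,\mathrm dx]_0^T\big|\le 4R\sqrt{\rho_2}\,E[u](0)$, while the $\alpha\int_\Omega\rho u_t u$ part is, by Poincar\'e's inequality, a lower-order term controlled by the $L^2(\Omega)\times H^{-1}(\Omega)$-norm of the initial data. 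Putting these together yields $\big(\beta T-4R\sqrt{\rho_2}\big)E[u](0)\le R\int_0^T\!\int_{\Gamma_1}|\partial_\nu u|^2+(\text{lower order})$.

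Finally, the standing hypothesis $\beta T>4R\sqrt{\rho_2}$ makes the coefficient of $E[u](0)$ positive, so that $E[u](0)\le C\big(\int_0^T\!\int_{\Gamma_1}|\partial_\nu u|^2+\|(\varphi,\psi)\|_{L^2(\Omega)\times H^{-1}(\Omega)}^2\big)$, and the residual compact lower-order term is removed by the usual compactness--uniqueness argument: a normalized sequence of solutions with Neumann traces on $\Gamma_1$ tending to zero would, along a weakly convergent subsequence, produce a nontrivial solution with vanishing Neumann data on $(0,T)\times\Gamma_1$, contradicting the unique continuation property for the wave operator, which applies since $T$ is large. I expect the sharp treatment of the transversal term $[\int_\Omega\rho u_t\,Mu\,\mathrm dx]_0^T$ together with this absorption of lower-order terms to be the main obstacle — it is exactly here that \textbf{Assumption II} (through the parameter $\beta$) and the geometric hypothesis $\{x\cdot\nu>0\}\subset\Gamma_1$ enter in an essential way, in order for the principal energy estimate to close with the threshold $4R\sqrt{\rho_2}$ in the variable-density setting.
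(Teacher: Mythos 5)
Your proposal is correct, but note that the paper does not actually prove this lemma: it is quoted verbatim (with its hypotheses and the threshold $\beta T>4R\sqrt{\rho_2}$) from the cited reference \cite{DE17} of Dehman and Ervedoza, so there is no in-paper argument to compare against. What you have written is essentially a reconstruction of the proof in that reference: the Rellich--Pohozaev multiplier $Mu=2x\cdot\nabla u+\alpha u$ with $\alpha=1+\beta/2$, the use of Assumption~II in the form $x\cdot\nabla\rho+(2-\beta)\rho\ge 0$ to bound the bulk terms below by $\beta T\,E[u](0)$ via energy conservation, the bound $x\cdot\nu\le R$ on the observed part of the boundary, and the estimate $\bigl|[\int_\Omega\rho u_t(2x\cdot\nabla u)\,\mathrm dx]_0^T\bigr|\le 4R\sqrt{\rho_2}\,E[u](0)$ for the transversal term -- the fact that your computation reproduces exactly the constant $4R\sqrt{\rho_2}$ appearing in the hypothesis is a good consistency check. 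Two points deserve slightly more care than your sketch gives them. First, the $\alpha u$ part of the multiplier is \emph{not} made lower order by Poincar\'e alone (that would only bound it by a constant times the energy, with no smallness); one must split it by Young's inequality into $\epsilon E[u](0)$ plus a term controlled by the weaker norm $\|(\varphi,\psi)\|_{L^2\times H^{-1}}$, the latter being the compact remainder. Second, the compactness--uniqueness step that removes this remainder rests on a unique continuation property for $\rho\partial_t^2-\Delta$ from $(0,T)\times\Gamma$; this is available here because $\rho=1/c_1$ is time-independent and smooth, but it should be invoked explicitly (for genuinely rough $\rho$, as in the generality of \cite{DE17}, this step is delicate and is handled there by a different mechanism). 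With these two clarifications your argument is a complete and standard proof of the lemma.
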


Recall the first order linearization part of $u$ in (\ref{eq:uj}). For $j=1,2$, we set $u^{(j)}$ to satisfy
\begin{equation*}
\left\{
\begin{aligned}
    &u^{(j)}_{tt} -\Delta (c_1u^{(j)})=0\ &&\text{in}\ M,\\
	&u^{(j)}(0,x) = \varphi^{(j)}(x),\ u^{(j)}_t(0,x) = \psi^{(j)}(x)\ &&\text {on}\ \Omega,\\
	&u^{(j)}(t,x) = h^{(1)}(t,x)\ &&\text {in}\ \Sigma.
\end{aligned}
\right.
\end{equation*}
Let $v^{(j)}=c_1u^{(j)}$. Then
\begin{equation}\label{eq:vjn}
\left\{
\begin{aligned}
&v^{(j)}_{tt}-c_1\Delta v^{(j)}=0\ &&\text{in}\ M,\\
&v^{(j)}(0,x) = c_1\varphi^{(j)}(x),\   v^{(j)}_t(0,x) =c_1 \psi^{(j)}(x)\ &&\text{in}\ \Omega,\\
&v^{(j)}(t,x)=c_1 h^{(1)}(t,x)\ &&\text{on}\ \Sigma,
\end{aligned}
\right.
\end{equation}
and
\begin{equation}\label{eq:Lambda vn}
\Lambda^{(v^{(j)},1)}_{L,N}:c_1h^{(1)}=v^{(j)}\left|_{\Sigma}\right.\mapsto c_1\frac{\partial v^{(j)}}{\partial\nu}.
\end{equation}
Set $v= v^{(1)}-v^{(2)}$. Then $v$ satisfies
\begin{equation}
\left\{
\begin{aligned}
    &v_{tt} - c_1\Delta v =0\ &&\text{in}\ M,\\
	&v(0,x) =c_1( \varphi^{(1)}-\varphi^{(2)}),\  v_t(0,x) = c_1(\psi^{(1)}-\psi^{(2)})\ &&\text {on}\ \Omega,\\
	&v(t,x) = 0\ &&\text {in}\ \Sigma.
\end{aligned}
\right.
\end{equation}
It is noted that $L$ is a-priori given. Then the condition $\Lambda_{\varphi_1,\psi_1,L,N_1}(h^{(1)})=\Lambda_{\varphi_2,\psi_2,L,N_2}(h^{(1)})$ for all $h^{(1)}\in \mathcal{O}^{m+1}(\Sigma)$ implies that
$\Lambda^{(v^{(1)},1)}_{L,N}(h^{(1)})=\Lambda^{(v^{(2)},1)}_{L,N}(h^{(1)})$ for all $h^{(1)}\in \mathcal{O}^{m+1}(\Sigma)$, which together with (\ref{eq:Lambda vn}) and $c_1>0$ readily yields that $\partial_{\nu}v=0$.

Let $w=v$ and $\rho = \frac{1}{c_1}$. Then it can be seen that $w$ satisfies
\begin{equation}
\left\{
\begin{aligned}
&\rho w_{tt}-\Delta w=0\ &&\text{in}\ M\\
&w(x,0) =c_1(\varphi^{(1)}-\varphi^{(2)}),\ w_t(x,0) =c_1(\psi^{(1)}-\psi^{(2)})\ &&\text {on}\ \Omega,\\	
&w(x,t) = 0\ &&\text {in}\ \Sigma,
\end{aligned}
\right.
\end{equation}
with the observation data as
\begin{equation*}
	\partial_{\nu} w = \partial_{\nu} (v) = 0\ \text{on}\ \Sigma.
\end{equation*}
By Lemma\,\ref{lem:observation inequality}, we have
\begin{equation*}
	\frac{1}{2}\int_{\Omega}\rho(x)|\partial_tw(x,0)|^2\, \mathrm{d}x+\frac{1}{2}\int_{\Omega}|\nabla_xw(x,0)|^2\, \mathrm{d}x=0,
\end{equation*}
which implies
\begin{eqnarray}\label{eq:ini a}
\partial_tw(x,0)&=&c_1(x)(\psi^{(1)}(x)-\psi^{(2)}(x))=0,\\
\nabla_xw(x,0) &=& \nabla (c_1(x)(\varphi^{(1)}(x)-\varphi^{(2)}(x)))=0.
\label{eq:ini b}
\end{eqnarray}
For the identity (\ref{eq:ini a}), since $c_1>0$ in $\Omega$, we have  $\psi^{(1)}=\psi^{(2)}$.
The formula (\ref{eq:ini b}) together with the compatibility condition (\ref{eq:compatibility conditions}) yields that
\begin{equation}
	\varphi^{(1)}(x)-\varphi^{(2)}(x)=0 \quad \mbox{on}~\Gamma,
\end{equation}
which immediately gives that $\varphi^{(1)}=\varphi^{(2)}$. Finally, by following a complete similar argument as that in the previous section, we can show that $c_n$ can be uniquely determined.


\section*{Acknowledgment}

The work of H. Liu was supported by the Hong Kong RGC General Research Funds (projects 11311122, 11300821 and 12301420), NSF/RGC Joint Research Fund (project N\_CityU101/21) and the ANR/RGC Joint Research Fund (project A\_CityU203/19). The work of K. Zhang is supported in part by China Natural National Science Foundation (No.~12271207), and by the Fundamental Research Funds for the Central Universities, China.


\end{document}